\documentclass[a4paper, 10pt, oneside, onecolumn]{article}

\RequirePackage{geometry}
\geometry{twoside,
	paperwidth=210mm,
	paperheight=297mm,
	textheight=622pt,
	textwidth=468pt,
	centering,
	headheight=50pt,
	headsep=12pt,
	footskip=40pt,
	footnotesep=24pt plus 2pt minus 12pt,
	columnsep=2pc
}

\usepackage[utf8]{inputenc}
\usepackage[T1]{fontenc}
\usepackage{lmodern}
\usepackage{amsmath}
\usepackage{amssymb}
\usepackage{amsfonts}
\usepackage{amsthm}
\usepackage{xcolor}
\usepackage{hyperref}
\usepackage{cleveref}
\usepackage{mathtools}
\usepackage[autostyle=true]{csquotes}

\hypersetup{
	colorlinks=false,
	pdfborder={0 0 0},
	pdftitle={Generalized solutions for a system of partial differential equations arising from urban crime modeling with a logistic source term},
	pdfauthor={Frederic Heihoff},
	pdfkeywords={},
	bookmarksopen=true,
}

\renewcommand{\phi}{\varphi}

\newtheorem{base}{Base}[section]
\numberwithin{equation}{section}

\theoremstyle{plain}
\newtheorem{theorem}[base]{Theorem}
\newtheorem{lemma}[base]{Lemma}
\newtheorem{prop}[base]{Proposition}

\theoremstyle{definition}
\newtheorem{definition}[base]{Definition}

\newcommand*{\medcap}{\mathbin{\scalebox{1.5}{\ensuremath{\cap}}}}

\newcommand{\R}{\mathbb{R}}
\newcommand{\N}{\mathbb{N}}
\renewcommand{\d}{\,\mathrm{d}}
\newcommand{\laplace}{\Delta}
\newcommand{\grad}{\nabla}
\renewcommand{\div}{\nabla \cdot}
\renewcommand{\L}[1]{{L^{#1}(\Omega)}}
\newcommand{\defs}{\coloneqq}
\newcommand{\stext}[1]{\;\;\text{ #1 }\;\;}
\newcommand{\eps}{\varepsilon}
\newcommand{\loc}{\mathrm{loc}}
\newcommand{\tmax}{{T_{\mathrm{max}}}}
\newcommand{\tmaxeps}{T_{\mathrm{max}, \eps}}
\newcommand{\supp}{\text{supp}}

\newcommand\numberthis{\addtocounter{equation}{1}\tag{\theequation}}

\makeatletter
\g@addto@macro\bfseries{\boldmath}
\makeatother

\title{Generalized solutions for a system of partial differential equations arising from urban crime modeling with a logistic source term}
\author{
	Frederic Heihoff\footnote{fheihoff@math.uni-paderborn.de}\\
	{\small Institut f\"ur Mathematik, Universit\"at Paderborn,}\\
	{\small 33098 Paderborn, Germany}
}
\date{}

\begin{document}
\maketitle
\begin{abstract}
	\noindent We consider the system
	\[
		\left\{
		\begin{aligned}
		u_t &= \Delta u - \chi \nabla \cdot ( \tfrac{u}{v}  \nabla v) - uv + \rho u - \mu u^2, \\
		v_t &= \Delta v - v + u v
		\end{aligned} 
		\right.
		\tag{$\star$}		
	\]
	with $\rho \in \mathbb{R}, \mu > 0, \chi > 0$ in a bounded domain $\Omega \subseteq \mathbb{R}^2$ with smooth boundary. While very similar to chemotaxis models from biology, this system is in fact inspired by recent modeling approaches in criminology to analyze the formation of crime hot spots in cities. The key addition here in comparison to similar models is the logistic source term. 
	\\[0.5em]
    The central complication this system then presents us with, apart from us allowing for arbitrary $\chi > 0$, is the nonlinear growth term $uv$ in the second equation as it makes obtaining a priori information for $v$ rather difficult. Fortunately, it is somewhat tempered by its negative counterpart and the logistic source term in the first equation. It is this interplay that still gives us enough access to a priori information to achieve the main result of this paper, namely the construction of certain generalized solutions to ($\star$).
	\\[0.5em]
	To illustrate how close the interaction of the $uv$ term in the second equation and the $-\mu u^2$ term in the first equation is to granting us classical global solvability, we further give a short argument showing that strengthening the  $-\mu u^2$ term to $-\mu u^{2+\gamma}$ with $\gamma > 0$ in the first equation directly leads to global classical solutions.
	\\[0.25em]
	\textbf{Keywords:} urban crime, reaction diffusion equation, global existence, generalized solutions, logistic source term \\
	\textbf{MSC (2010):} 35Q91 (primary); 35B40, 35K55, 91D10 (secondary) 
\end{abstract}

\section{Introduction}
In this paper we discuss the system 
\begin{equation}
\left\{
\begin{aligned}
u_t &= \laplace u - \chi \div ( \tfrac{u}{v}  \grad v) - uv + \rho u - \mu u^2,   \\
v_t &= \laplace v - v + u v
\end{aligned}
\right. \label{problem}
\end{equation}
with $\rho \in \R$, $\mu > 0$, $\chi > 0$. While this system is in fact motivated by recent modeling approaches in criminology, we will first establish some of its broader context and therefore take a quick detour to biology and the mathematical modeling of chemotactic movement of certain microscopic organisms. Here, chemotaxis means the process whereby organisms move along a chemical gradient towards an attractant. Modeling this process using systems of partial differential equations has largely been started by the seminal work of Keller and Segel in 1970  (cf.\ \cite{keller1970initiation}), in which they modeled a population of \enquote*{dictyostelium discoideum} slime mold to understand their aggregation behavior observed in experiments using the following (here somewhat simplified) system:
\begin{align*}
\left\{
	\begin{aligned}
	u_t &= \laplace u - \div (u\grad v) \\
	v_t &= \laplace v - v + u
	\end{aligned}
\right.
\end{align*}
In this system, the functions $u$ and $v$ model the cell and attractant concentrations while the term $-\div (u\grad v)$ models the central mechanism of the model, namely the chemotaxis. The remaining terms either model the diffusion of cells or attractant or their production and decay behavior.
\\[0.5em]
The efficacy of this approach was then later confirmed from a mathematical perspective as the same aggregation behavior is also present in solutions to this system for a large set of initial data when considered in three dimensions. In mathematical terms, this is expressed by solutions blowing up in finite time but conserving their mass (cf.\ \cite{MR3115832}). Among others, this success has then led to various further chemotactic processes from biology being modeled and then subsequently mathematically analyzed in recent years. For a broader overview of this, see \cite{MR3351175}.  
\\[0.5em]
As biology is not the only field concerned with analyzing the movement of agents towards some kind of goal, other fields have taken notice of this new, successful modeling approach and translated it to their setting. One such field is criminology. Here, cells are replaced by criminals and attractant chemicals are replaced by a somewhat more abstract notion of attractiveness of locations for criminal activity. As such with the central goal of understanding crime hot spots, Short et al. introduced the following (here somewhat simplified) system in 2008 (cf.\ \cite{short2008statistical}), which is based on a \enquote*{routine activity} modeling approach (cf.\ \cite{cohen1979social} and \cite{felson1987routine}) and insights gained in \cite{johnson1997new}, \cite{short2009measuring} and \cite{wilson1982broken} about repeat victimization and crime and disorder generally leading to more of the same:
\begin{equation}
\left\{
\begin{aligned}
u_t &= \laplace u - \chi\div (\tfrac{u}{v}\grad v) - uv + \Psi \\
v_t &= \laplace v - v + uv + \Phi
\end{aligned}
\right.
\label{classic_crime_model}
\end{equation} 
with $\chi = 2$.
In this system $u$ and $v$ represent the criminal population and attractiveness factor for criminal activity respectively, while $-\chi\div (\tfrac{u}{v}\grad v)$ is still the, here slightly modified, taxis term, which this time models the tendency of criminals to move towards high attractiveness areas. This modification to the taxis term has been introduced by Short et al. to account for the fact that there is an aspect of diminishing returns to consider regarding attractiveness, meaning that a high attractiveness of the current location of a criminal makes them much less likely to move from there as even a higher attractiveness areas do not seem much better in comparison. The paired $uv$ terms are meant to represent expected values of crime in an area at a certain time modeling essentially that crime in an area leads to higher attractiveness and less repeat crime (cf.\ again \cite{johnson1997new}, \cite{short2009measuring} and \cite{wilson1982broken}). The functions $\Phi$ and $\Psi$ further represent some growth information about criminals and attractiveness independent of the model functions $u$ and $v$, e.g.\ the socio-economic state of certain areas of a city at certain points in time influencing criminalization and creation of attractive targets for criminal activity. For a broader survey of models derived from this, see \cite{d2015statistical}.
\\[0.5em]
In terms of the mathematical analysis of this model, there have been e.g.\ global classical existence results in one dimension in \cite{rodriguez2019global} and arbitrary dimension, but with some restrictions on $\chi$, in \cite{MR3879245}. Furthermore, existence of solutions for the two-dimensional, radially symmetrical case has been studied in \cite{MR4002172} and a similar result for classical solutions given small initial data can be found in \cite{PreprintSmallDataSolution}. See also \cite{PreprintNonLinearDiffusion}, in which existence of certain weak solutions for a variant of (\ref{problem}) with sufficiently strong nonlinear diffusion is discussed. As it is the central feature of interest from an application perspective, there have also been various discussions of hot spot formation in e.g.\  \cite{MR3163243}, \cite{MR2982715}, \cite{MR3491512}. For some theory about models from biology featuring a similar singular sensitivity function in various settings see e.g.\  \cite{MR2778870}, \cite{MR3674184}. \cite{1937-1632_2020_2_119} or \cite{MR3887138} for a case also featuring a logistic source term.
\\[0.5em]
Let us now return our focus to the model (\ref{problem}), which is the central object of study in this paper. While it is still very similar to the classic model  (\ref{classic_crime_model}) introduced by Short et al., there exist some important differences, namely that we removed the static source terms $\Phi$ and $\Psi$ for convenience of notation, but introduced an additional logistic source term in the first equation. Source terms of this kind are a fairly standard addition to chemotaxis models in biology to represent that cells reproduce while still incorporating the idea that this reproduction even when considered in isolation cannot be unbounded as cells compete for some finite resources, e.g.\ space. This idea then fairly cleanly translates to criminals, where reproduction is replaced by criminalization of individuals in the area by the existing criminal population while criminals still compete with each other for e.g.\  good targets, which are a limited resource.
\paragraph{Main result.}
The main result of this paper is the construction of certain generalized solutions for the system (\ref{problem}) similar to those considered in \cite{MR3383312} or \cite{MR3859449}. Or put more precisely, we consider the following setting: We study the system (\ref{problem}) with parameters $\rho \in \R$, $\mu > 0$, $\chi > 0$ in a bounded domain $\Omega \subseteq \R^2$ with smooth boundary. We further add the boundary conditions 
\begin{equation}
	\grad u \cdot \nu = \chi\tfrac{u}{v} \grad v \cdot \nu, \;\;\;\; \grad v \cdot \nu = 0 \;\;\;\; \text{ for all } x\in\partial\Omega, t > 0
\label{boundary_conditions}
\end{equation}
and initial conditions 
\begin{equation}
	u(x,0) = u_0(x), \;\;\;\; v(x,0) = v_0(x) \;\;\;\; \text{ for all } x \in \Omega
	\label{intial_conditions}
\end{equation}
for initial data with the following properties:
\begin{equation}
	\left\{\;
	\begin{aligned}
		&u_0 \in C^0(\overline{\Omega}) \;\;\;\;\;\;&& \text{with } u_0 \geq 0  \,\text{ in } \overline{\Omega}, \\
		&v_0 \in W^{1,\infty}(\Omega) && \text{with } v_0 > 0 \;\text{ in } \overline{\Omega}
	\end{aligned}
	\right. .
	\label{intial_data_regularity}
\end{equation}
For the sake of simplicity, we fix the domain $\Omega$ and parameters $\rho, \mu, \chi$ from here on out. 
\\[0.5em]
Under these assumptions, we then derive the following existence result:
\begin{theorem}
	\label{theorem:main}
	The system (\ref{problem}) with boundary conditions (\ref{boundary_conditions}) and initial data (\ref{intial_conditions}) with properties (\ref{intial_data_regularity}) has a global generalized solution $(u,v)$ in the sense of \Cref{definition:weak_solution} below.
\end{theorem}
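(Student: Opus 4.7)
The natural strategy is to approximate (\ref{problem}) by a family of regularized problems whose solutions exist globally in a classical sense, to derive $\eps$-independent a priori bounds on these approximants, and then to pass to the limit $\eps\searrow 0$ in a formulation weak enough to survive the limited compactness available for the singular taxis term. A typical choice is to replace the sensitivity $\tfrac{u}{v}$ by $\tfrac{u_\eps}{v_\eps(1+\eps u_\eps)}$ (or by $\tfrac{u_\eps}{v_\eps+\eps}$), which turns that factor into a bounded nonlinearity. Local classical existence of the regularized system on some $[0,\tmaxeps)$ is then standard, and the logistic damping $-\mu u_\eps^2$ together with the a priori bounds derived below should prevent blow-up of $u_\eps$ in any relevant norm, yielding global classical solvability of the approximate problem.

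The key work then lies in the derivation of $\eps$-independent bounds. First, integrating the $u_\eps$-equation in space gives $\tfrac{d}{dt}\int_\Omega u_\eps \leq \rho\int_\Omega u_\eps - \mu\int_\Omega u_\eps^2$, and via $(\int u_\eps)^2 \leq |\Omega|\int u_\eps^2$ this yields a uniform-in-time bound on $\int_\Omega u_\eps$ and, upon time-integration, a bound on $\int_0^T\!\!\int_\Omega u_\eps^2$. Adding both equations makes the $\pm u_\eps v_\eps$ terms cancel and supplies a corresponding uniform bound on $\int_\Omega v_\eps$. Because $u_\eps v_\eps \geq 0$, the comparison principle applied to the $v_\eps$-equation furnishes the pointwise lower bound $v_\eps(x,t)\geq e^{-t}\min_{\overline\Omega} v_0$, which tames the singular sensitivity on every finite time interval. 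I would additionally extract from the $v_\eps$-equation, via parabolic regularity and the $L^2_{t,x}$-bound on $u_\eps$, a bound on $\grad v_\eps$ in a suitable $L^p_tL^q_x$-space, and test the $u_\eps$-equation against $\tfrac{1}{u_\eps+1}$ to get an $L^2_{t,x}$-bound on $\grad\log(u_\eps+1)$; the singular term produced by the test function is controlled precisely by combining the $v$-lower bound, the space--time $L^2$-bound on $u_\eps$ and the gradient bound on $v_\eps$.

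The anticipated central obstacle is the limit passage in the taxis flux $\tfrac{u_\eps}{v_\eps}\grad v_\eps$, whose two factors converge only weakly with no obvious strong compactness for their product. This is why the concept of generalized solution in \Cref{definition:weak_solution} will be formulated, in the spirit of \cite{MR3383312} and \cite{MR3859449}, as a distributional supersolution inequality for a concave function of $u$ (most naturally $\log(u+1)$) together with a separate mass-type inequality for $u$ and a weak formulation of the $v$-equation, all of which are stable under weak-$\ast$ and a.e.\ convergence thanks to the concavity/convexity structure of the test functionals. I would complete the argument by an Aubin--Lions type compactness step applied to $\log(u_\eps+1)$ and to $v_\eps$ to upgrade the weak convergences to pointwise ones, exploit the strict positivity $v_\eps\geq e^{-t}\min_{\overline\Omega} v_0$ to handle $\tfrac{1}{v_\eps}$ in the limit, and then verify in turn each defining ingredient of a generalized solution for the limit pair $(u,v)$.
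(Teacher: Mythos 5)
Your overall architecture matches the paper's: regularize the taxis term so that the approximate problems are globally classically solvable, derive $\eps$-independent $L^1$ bounds and the space--time $L^2$ bound on $u_\eps$ by adding the equations, obtain the lower bound $v_\eps \geq e^{-t}\min_{\overline\Omega}v_0$ by semigroup comparison, test the first equation with $\tfrac{1}{u_\eps+1}$, and pass to the limit in a mass inequality, a supersolution inequality for $\ln(u+1)$ and a weak formulation of the $v$-equation via Aubin--Lions and a.e.\ convergence. However, two steps that carry the real weight of the argument are missing or glossed over.

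First, you propose to get a gradient bound on $v_\eps$ ``via parabolic regularity and the $L^2_{t,x}$-bound on $u_\eps$.'' This does not close as stated: the source term in the $v_\eps$-equation is $u_\eps v_\eps$, and at this stage you only control $v_\eps$ in $L^\infty_t L^1_x$, so $u_\eps v_\eps$ lies in no space from which parabolic regularity yields anything useful --- the estimate is circular in $v_\eps$. The paper's central lemma resolves exactly this: testing the second equation with $v_\eps^{p-1}$ and applying the two-dimensional Gagliardo--Nirenberg inequality to $\|v_\eps^{p/2}\|_{L^4}^2$ produces a differential inequality of the form $\tfrac{d}{dt}\int_\Omega v_\eps^p \leq K\left(\int_\Omega u_\eps^2\right)\int_\Omega v_\eps^p + \dots$, and Gronwall together with $\int_0^T\!\!\int_\Omega u_\eps^2 \leq C(T)$ gives $L^\infty_t L^p_x$ bounds on $v_\eps$ for every finite $p$ \emph{and} the space--time bound on $\grad v_\eps^{p/2}$. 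Without some such mechanism your a priori scheme for $v_\eps$ (and hence for $\grad\ln(u_\eps+1)$, whose estimate consumes $\int_0^T\!\!\int_\Omega|\grad v_\eps|^2$) has no foundation.

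Second, your claim that the limit formulation is ``stable under weak-$\ast$ and a.e.\ convergence thanks to the concavity/convexity structure'' covers only the quadratic term $\int\!\!\int|\grad\ln(u_\eps+1)|^2\phi$, which indeed passes by weak lower semicontinuity. The cross term $\int\!\!\int\tfrac{u_\eps}{v_\eps(u_\eps+1)}\left(\grad\ln(u_\eps+1)\cdot\grad v_\eps\right)\phi$ is a product of two sequences each converging only weakly in $L^2$, and weak--weak products do not pass to the limit. The paper devotes a separate section to upgrading $\grad v_{\eps_j}\rightharpoonup\grad v$ to \emph{strong} $L^2_\loc$ convergence: one first verifies that $v$ itself satisfies the weak $v$-equation, then tests it with time-averaged copies of $v$ to obtain a reverse inequality for $\int_0^T\!\!\int_\Omega|\grad v|^2$, which combined with weak lower semicontinuity forces convergence of the norms and hence strong convergence. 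Your proposal needs this (or an equivalent) ingredient to make the limit passage in the taxis terms legitimate.
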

\paragraph{Complications.} 
As is common to most (chemo)taxis type systems, the taxis term is always somewhat of a complication because it often stands in the way of easy access to a priori information for the first solution component. In our case, it could be argued that this is amplified by the fact that we allow it to be arbitrarily strong (meaning allowing for arbitrarily big values of $\chi > 0$, $\chi = 2$ being the critical case in the classic Short model (\ref{classic_crime_model})) and have it include a singular sensitivity function $\frac{u}{v}$. While the singular sensitivity might seem critical at first glance, it poses at least for existence theory only negligible problems. This is the case because at least for finite times there always exists a positive lower bound for $v$ by straightforward use of semigroup methods, which means that for all the relevant existence theory the sensitivity $\frac{u}{v}$ is no more problematic than a sensitivity of the form $u$. On the other hand, allowing for arbitrary $\chi > 0$ and therefore $\chi = 2$ seems to be much more of a hurdle to constructing solutions as it makes adapting the techniques seen in e.g.\ \cite{MR3879245} for small values of $\chi$ infeasible.
\\[0.5em]
Apart from the fairly standard complications introduced by the taxis term, the main complication in terms of us being able to derive sufficient a priori estimates to allow for the existence of global solutions is the nonlinear $uv$ term in the second equation. While it can be played against a similar term in the first equation to at least gain some initial $L^1$ type estimates for $u$ and $v$, it is still highly problematic when trying to derive higher $L^p$ bounds for the second solution component. This problem is only slightly tempered by the integrability properties for $\int_\Omega u^2$ granted to us by the logistic source term in the first equation of (\ref{problem}), which allow us to at least gain $L^p$ bounds for $v$ and any finite $p$, but are to our knowledge not quite enough to gain the critical $L^\infty$ bound for $v$ we would need to gain classical solutions. 

\paragraph{Existence of classical solutions given a stronger logistic source.} To illustrate how critical this interaction of the logistic source term in the first equation and the growth term $uv$ in the second equation is in two dimensions, we will in this paper also consider an altered version of (\ref{problem}) with a slightly strengthened logistic source term, namely
\begin{equation}
	\left\{
	\begin{aligned}
	u_t &= \laplace u - \chi \div ( \tfrac{u}{v}  \grad v) - uv + \rho u - \mu u^{2+\gamma},  \\
	v_t &= \laplace v - v + uv
	\end{aligned}
	\right. \label{weaker_problem}
\end{equation}
with $\gamma > 0$, which is also fixed from here on out similar to the other parameters. While this system is in fact very similar to (\ref{problem}), we will later see in \Cref{section:weakend_case} that this small addition of a slightly stronger logistic source term directly leads to classical solvability, or more precisely, to the following proposition: 
\begin{prop}
	\label{prop:weaker_system_stronger_solution}
	The system (\ref{weaker_problem}) with boundary conditions (\ref{boundary_conditions}) and initial data (\ref{intial_conditions}) with properties (\ref{intial_data_regularity}) has a unique, global classical solution $(u,v)$.
\end{prop}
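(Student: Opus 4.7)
The plan is to adapt the standard scheme for classical solvability of chemotaxis-type systems, with the enhanced dissipation $-\mu u^{2+\gamma}$ doing the work that $-\mu u^2$ cannot. Standard parabolic fixed-point theory, along the lines of what has been used for related systems in the references cited above, yields a maximal classical solution $(u,v)$ on some interval $[0,\tmax)$ with $u \geq 0$ and $v > 0$ in $\overline{\Omega}$, together with an extensibility criterion: $\tmax < \infty$ forces $\limsup_{t \nearrow \tmax}(\|u(\cdot,t)\|_{L^\infty(\Omega)} + \|v(\cdot,t)\|_{W^{1,q}(\Omega)}) = \infty$ for some fixed $q > 2$. Comparison applied to $v_t \geq \laplace v - v$ immediately yields $v(\cdot,t) \geq e^{-t} \min_{\overline{\Omega}} v_0 > 0$ on $[0,\tmax)$, so the singular sensitivity $\tfrac{u}{v}$ causes no trouble on finite time intervals.

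The next step is to extract from the logistic source the two pieces of a priori information that drive everything else: a uniform $L^1$ bound for $u$ and $v$, and a spacetime estimate $\int_0^T\!\int_\Omega u^{2+\gamma} \leq C(T)$ on each finite subinterval of $[0,\tmax)$. Adding the spatial integrals of the two equations cancels the $\pm uv$ contributions and gives $\tfrac{d}{dt}\int_\Omega (u+v) = \rho \int_\Omega u - \mu \int_\Omega u^{2+\gamma} - \int_\Omega v$; Young's inequality lets us absorb $\rho \int_\Omega u$ into a fraction of the $-\mu \int_\Omega u^{2+\gamma}$ dissipation, producing a linear ordinary differential inequality for $\int_\Omega(u+v)$ that yields both stated bounds after a time integration.

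The central step is then a bootstrap on the second component. Testing the $v$-equation by $v^{p-1}$ for $p$ large produces the identity $\tfrac{1}{p}\tfrac{d}{dt}\|v\|_{L^p(\Omega)}^p + \tfrac{4(p-1)}{p^2}\|\grad v^{p/2}\|_{L^2(\Omega)}^2 + \|v\|_{L^p(\Omega)}^p = \int_\Omega u v^p$. Splitting the right-hand side by Young with conjugate exponents $2+\gamma$ and $\tfrac{2+\gamma}{1+\gamma}$ channels the $u$-contribution into the spacetime bound just obtained and leaves $\int_\Omega v^{p(2+\gamma)/(1+\gamma)}$. The exponent $p(2+\gamma)/(1+\gamma)$ is strictly smaller than $2p$ precisely because $\gamma > 0$, and in two dimensions the Gagliardo-Nirenberg inequality applied to $w = v^{p/2}$ controls this quantity by $\|\grad w\|_{L^2(\Omega)}^{2/(1+\gamma)}$ times lower-order factors; since $2/(1+\gamma) < 2$, a further Young step absorbs the gradient factor into the dissipation, leaving a differential inequality for $\|v\|_{L^p(\Omega)}^p$ with time-integrable forcing. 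The outcome is $v \in L^\infty_{\loc}([0,\tmax); L^p(\Omega))$ for every $p < \infty$; a semigroup bootstrap on $v_t - \laplace v + v = uv$ using standard heat-kernel estimates then promotes this to $v \in L^\infty_{\loc}([0,\tmax); L^\infty(\Omega))$ and, by parabolic regularity, to $v \in L^\infty_{\loc}([0,\tmax); W^{1,q}(\Omega))$ for every $q$.

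With $v$ bounded both above and below, the taxis term carries an effectively bounded sensitivity, and a standard $L^p$ testing of the first equation, once again playing $-\mu u^{2+\gamma}$ against the taxis contribution and the linear $\rho u$ term, combined with a Moser-type iteration, delivers a uniform bound on $\|u(\cdot,t)\|_{L^\infty(\Omega)}$ on each finite subinterval of $[0,\tmax)$. The extensibility criterion is thereby never triggered, so $\tmax = \infty$, and uniqueness is routine via a contraction argument on the difference of two hypothetical solutions on short time intervals. The delicate step is clearly the third; for $\gamma = 0$ the analogous Young splitting would force control of $\int_\Omega v^{2p}$, whose Gagliardo-Nirenberg interpolation in two dimensions produces a gradient factor of exactly the same order as the dissipation, precluding any iteration to $L^\infty$ bounds for $v$, and this is precisely the obstruction that the extra $\gamma$ removes.
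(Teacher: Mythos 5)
Your overall architecture matches the paper's: local existence with a blow-up criterion, the lower bound $v \geq e^{-t}\min_{\overline\Omega} v_0$, the $L^1$ bounds plus the spacetime bound $\int_0^T\!\int_\Omega u^{2+\gamma}$ from the logistic term, then $L^p$ bounds for $v$, then $L^\infty$ and $W^{1,q}$ bounds for $v$ via Duhamel, and finally $L^\infty$ for $u$. However, your ``central step'' as written contains a genuine gap. After splitting $\int_\Omega u v^p \leq \tfrac{1}{2+\gamma}\int_\Omega u^{2+\gamma} + \tfrac{1+\gamma}{2+\gamma}\int_\Omega v^{p(2+\gamma)/(1+\gamma)}$, the two-dimensional Gagliardo--Nirenberg inequality applied to $w = v^{p/2}$ with $s = \tfrac{2(2+\gamma)}{1+\gamma}$ gives $\|w\|_{L^s(\Omega)}^s \lesssim \|\grad w\|_{L^2(\Omega)}^{\,2/(1+\gamma)}\|w\|_{L^2(\Omega)}^2 + \|w\|_{L^2(\Omega)}^s$; the factor multiplying the gradient is not ``lower order'' but is $\int_\Omega v^p$ itself, i.e.\ exactly the quantity you are trying to bound. (Interpolating instead down to the controlled norm $\|w\|_{L^{2/p}(\Omega)} = (\int_\Omega v)^{p/2}$ raises the gradient exponent to $2+\tfrac{2}{1+\gamma}-\tfrac{2}{p} > 2$ for large $p$, so absorption fails outright.) Absorbing $\|\grad w\|_{L^2(\Omega)}^{2/(1+\gamma)}\int_\Omega v^p$ by Young leaves $C(\int_\Omega v^p)^{(1+\gamma)/\gamma}$, so your differential inequality is superlinear in $y = \int_\Omega v^p$ rather than one with ``time-integrable forcing''; such an ODI admits finite-time blow-up and Gronwall does not close the argument. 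The repair is to not decouple $u$ from $v^p$ by Young at all: estimate $\int_\Omega uv^p \leq \|u\|_{L^2(\Omega)}\|v^{p/2}\|_{L^4(\Omega)}^2$, apply Gagliardo--Nirenberg to $\|v^{p/2}\|_{L^4(\Omega)}^2$ and Young, which yields a \emph{linear} inequality $y' \leq -c\|\grad w\|_{L^2(\Omega)}^2 + C(\int_\Omega u^2)\,y + C(1+\int_\Omega u^2)$ whose coefficient $\int_\Omega u^2$ is integrable in time; Gronwall then gives the $L^p$ bounds. This is \Cref{lemma:v_bounds}.

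This repair also corrects your closing diagnosis of where $\gamma>0$ is essential: the $L^p(\Omega)$ bounds for $v$ for every finite $p$ hold already for $\gamma=0$ (they are the backbone of the generalized-solution construction in the main part of the paper). The step that genuinely requires $\gamma>0$ is the one you describe as a routine ``semigroup bootstrap'': in the Duhamel estimate for $\|v(\cdot,t)\|_{L^\infty(\Omega)}$ and $\|\grad v(\cdot,t)\|_{L^q(\Omega)}$ one must control $\int_0^t (t-s)^{-1/2}e^{-(t-s)}\|u v\|_{L^q(\Omega)}\d s$ with some $q>2$, and after H\"older ($\|uv\|_{L^q(\Omega)} \leq \|u\|_{L^{2+\gamma}(\Omega)}\|v\|_{L^{p}(\Omega)}$ with $q<2+\gamma$) the temporal Young inequality with exponents $2+\gamma$ and $r = \tfrac{2+\gamma}{1+\gamma}$ needs $(t-s)^{-r/2}$ to be integrable, i.e.\ $r<2$, which holds precisely because $\gamma>0$; for $\gamma=0$ one is left with the non-integrable kernel $(t-s)^{-1}$. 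Your final step for $u$ (testing plus Moser iteration) differs from the paper's Duhamel argument with the interpolation $\|u\|_{L^r(\Omega)} \leq \|u\|_{L^1(\Omega)}^{\alpha}\|u\|_{L^\infty(\Omega)}^{1-\alpha}$ and the self-map inequality $M_T \leq K + KM_T^{1-\alpha}$, but either route is standard once $\|\grad v\|_{L^q(\Omega)}$, $q>2$, is bounded.
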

\noindent This result is mostly made possible due to the fact that the stronger logistic source term allows us to bridge a critical gap in a priori information for $v$ (more precisely it lets us derive an $L^\infty$ bound for $v$ and some crucial bounds for the gradient of $v$ as seen in \Cref{lemma:weak_higher_v_bounds}). Considered in this way, our case therefore seems to be just on the boundary to classical solvability, but as far as we know only allows for e.g.\ generalized solutions in the sense of \Cref{definition:weak_solution}.

\paragraph{Approach.} While some ideas could maybe already be gleamed from the discussion of the critical terms in (\ref{problem}), let us now give a more detailed overview of our approach in this paper:
\\[0.5em]
As is common when constructing weak or generalized solutions, our approach is based on the analysis of regularized versions of the problem (\ref{problem}), indexed by $\eps \in (0,1)$ (cf.\ (\ref{approx_problem})), that admit global classical solutions $(u_\eps, v_\eps)$ and approach the original problem as $\eps \searrow 0$. It is then our aim to derive bounds for these approximate solutions independent of $\eps$ and use well-known compact embedding properties of certain function spaces (e.g.\ due to the Aubin--Lions lemma) to gain solution candidates as limits of the approximate solutions $(u_\eps, v_\eps)_{\eps \in (0,1)}$ along a suitable sequence $(\eps_j)_{j\in\N} \subseteq (0,1)$ with $\eps_j \searrow 0$ as $j \rightarrow \infty$. The last step is then to derive sufficient convergence properties for the sequence $(u_{\eps_j}, v_{\eps_j})_{j\in\N}$ to translate the necessary solutions properties from the approximate solutions to our solution candidates.
\\[0.5em]
The key point in this approach (as in many others) is the derivation of sufficient a priori information. While some baseline $L^1$ estimates can be gained by the fairly common approach to add the first two equations in (\ref{approx_problem}) to cancel out the $u_\eps v_\eps$ terms, it is higher $L^p$ bounds for $v$ and its gradients where the key insight in this paper comes in. To derive these, we first notice that the logistic source term in the first equation in (\ref{approx_problem}) gives us a very useful integrability property for $\int_\Omega u_\eps^2$, which can then be used when testing the second equation in (\ref{approx_problem}) with $v_\eps^{p-1}$ to rein in the problematic influence coming from the resulting $u_\eps v_\eps^p$ terms just about enough to gain $L^p$ bounds for $v_\eps$ for all finite $p$ and an integrability property for $\int_\Omega|\grad v_\eps|^2$ (cf.\ \Cref{lemma:v_bounds}) due to us only considering a two-dimensional setting. It is both of these properties that lead us to the necessary compact embedding properties for the second solution component and allow us to derive a useful integrability property for $\int_\Omega\frac{|\grad u_\eps|^2}{(u_\eps + 1)^2}$ by testing the first equation with $\frac{1}{u_\eps + 1}$. While the latter does not help us in deriving further a priori estimates for the first solution component itself, this integrability property at least ensures sufficient compact embedding properties for $\ln(u_\eps + 1)$. 
\\[0.5em]
Though the above a priori information already grants us most of the convergence properties we need to translate solution properties from the approximate solutions to the solution candidates as a consequence of the used compactness arguments, we devote \Cref{section:grad_v_convergence} to deriving some additional convergence properties for $\grad v_\eps$ by adapting methods found in e.g.\ \cite{MR3383312} and \cite{MR3859449}. These additional properties are mostly necessary to handle the taxis-induced terms. 

\section{Generalized solution concept and approximate solutions}
Due to the complications laid out in the introduction, classical solutions to (\ref{problem}) seem to us to be out of reach for now and as such we will in this paper focus on a more generalized solution concept similar to the one introduced in e.g.\ \cite{MR3383312}. These solutions are defined as follows:
\begin{definition}
	We call nonnegative functions $u,v$ with
	\begin{equation}
		\begin{aligned}
		u &\in L^2_\loc(\overline{\Omega}\times[0,\infty))\cap L^\infty([0,\infty);L^1(\Omega)) , \\
		\ln(u + 1) &\in L^{2}_\loc([0,\infty);W^{1,2}(\Omega)), \\
		v &\in \medcap_{p\geq 1} L_\loc^\infty([0,\infty); L^p(\Omega)) \cap L_\loc^2([0,\infty); W^{1,2}(\Omega)) \text{ and } \\
		v^{-1} &\in L^\infty_\loc(\overline{\Omega}\times[0,\infty))
		\end{aligned}
		\label{wsol:regularity}	
	\end{equation}
	a generalized solution of (\ref{problem}) with (\ref{boundary_conditions}) and (\ref{intial_conditions}), if 
	\[
		\int_\Omega u(\cdot, T) - \int_\Omega u_0 \leq -\int_0^T\int_\Omega uv + \rho\int_0^T \int_\Omega u - \mu \int_0^T \int_\Omega u^2
		\numberthis
		\label{wsol:mass_property}
	\]
	for a.e.\ $T > 0$ and
	\begin{align*}
		-\int_0^\infty \int_{\Omega} \ln(u+1)\varphi_t - \int_{\Omega} \ln(u_0 + 1)\varphi(\cdot, 0) \geq& \int_0^\infty \int_{\Omega} \ln(u+1)\laplace\varphi + \int_0^\infty \int_{\Omega} |\grad \ln(u+1)|^2\varphi \\
		& - \chi \int_0^\infty \int_{\Omega} \frac{u}{v(u+1)} (\grad \ln(u+1) \cdot \grad v)\varphi \\
		& + \chi\int_0^\infty \int_{\Omega} \frac{u}{v(u+1)} \grad v \cdot \grad \varphi \\
		& -\int_0^\infty \int_\Omega \frac{uv}{u+1}\phi \\
		& + \rho\int_0^\infty \int_\Omega \frac{u}{u+1}\phi - \mu \int_0^\infty \int_\Omega \frac{u^2}{u+1} \phi \numberthis \label{wsol:ln_u_inequality}
	\end{align*}
	holds for all nonnegative $\phi \in C^\infty_0(\overline{\Omega}\times[0,\infty))$ with $\grad \phi \cdot \nu = 0$ on $\partial \Omega\times(0,\infty)$ and if 
	\[
		\int_0^\infty\int_\Omega v \phi_t + \int_\Omega v_0 \phi(\cdot, t)  = \int_0^\infty \int_\Omega \grad v \cdot \grad \phi + \int_0^\infty\int_\Omega v \phi - \int_0^\infty\int_\Omega uv \phi
		\numberthis
		\label{wsol:v_inequality}
	\]
	holds for all $\phi \in\medcap_{p\geq 1} L^\infty((0,\infty); \L{p}) \cap  L^2((0,\infty); W^{1,2}(\Omega))$ with $\phi_t \in L^2(\Omega\times[0,\infty))$ and compact support in $\overline{\Omega}\times[0,\infty)$.
	\label{definition:weak_solution}
\end{definition}
\noindent First note that, due to the regularity properties in (\ref{wsol:regularity}), all the integrals in the above definition are well-defined.
\\[0.5em]
\noindent Let us now briefly argue that this solution concept is sensible, meaning that classical solutions of (\ref{problem}) with (\ref{boundary_conditions}) and (\ref{intial_conditions}) are generalized solutions and sufficiently regular generalized solutions are in fact classical. 
That classical solutions satisfy \Cref{definition:weak_solution} is fairly easy to see by testing the first equation in (\ref{problem}) with $1$ as well as $\frac{\phi}{u + 1}$ and second equation in (\ref{problem}) with $\phi$ for appropriate functions $\phi$, applying partial integration and rearranging somewhat. As such, we will not expand on this point, but rather focus on the opposite direction, which is far more tricky and non-obvious. We will therefore now give the full argument for this based on prior work in \cite[Lemma 2.1]{MR3383312} for a similar generalized solution concept.
\begin{lemma}
	If $u,v \in C^{2,1}(\overline{\Omega}\times(0,\infty)) \cap  C^0(\overline{\Omega}\times[0,\infty))$ is a generalized solution in the sense of \Cref{definition:weak_solution} with initial data according to (\ref{intial_data_regularity}), then it is already a classical solution of (\ref{problem}) with boundary conditions (\ref{boundary_conditions}) and initial conditions (\ref{intial_conditions}). 
\end{lemma}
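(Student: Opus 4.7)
The argument splits into a classical treatment of $v$, a classical treatment of $u$, and verification of the boundary and initial conditions. Since (\ref{wsol:v_inequality}) is already an equality, the $v$-part is standard: integrating by parts in time (using continuity of $v$ up to $t = 0$) and in space, then varying $\phi$ successively over $C^\infty_c(\Omega \times (0,\infty))$, over functions with nontrivial boundary trace but $\phi(\cdot,0) = 0$, and over functions with $\phi(\cdot,0) \not\equiv 0$, yields pointwise the PDE $v_t = \Delta v - v + uv$ in $\Omega \times (0,\infty)$, the Neumann condition $\nabla v \cdot \nu = 0$ on $\partial\Omega$, and $v(\cdot,0) = v_0$, respectively. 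I use $\nabla v \cdot \nu = 0$ freely in what follows.

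For $u$, define the residual
\begin{equation*}
R \ := \ u_t - \Delta u + \chi \nabla \cdot (\tfrac{u}{v}\nabla v) + uv - \rho u + \mu u^2 \ \in\ C^0(\overline\Omega \times (0,\infty)),
\end{equation*}
so the task is to show $R \equiv 0$ in $\Omega$ and $\nabla u \cdot \nu \equiv 0$ on $\partial\Omega$; these together with $\nabla v \cdot \nu = 0$ then also yield the flux boundary condition (\ref{boundary_conditions}). Integrating (\ref{wsol:ln_u_inequality}) by parts in time (without yet assuming $u(\cdot,0) = u_0$) and in space, and using the pointwise identities $\tfrac{\Delta u}{u+1} = \Delta \ln(u+1) + |\nabla \ln(u+1)|^2$ and $\tfrac{1}{u+1}\nabla \cdot(\tfrac{u}{v}\nabla v) = \nabla \cdot(\tfrac{u}{v(u+1)}\nabla v) + \tfrac{u}{v(u+1)}\nabla \ln(u+1)\cdot \nabla v$ together with $\nabla v \cdot \nu = 0$, one sees after cancellations that (\ref{wsol:ln_u_inequality}) reduces to
\begin{equation*}
\int_\Omega[\ln(u(\cdot,0)+1) - \ln(u_0+1)]\phi(\cdot,0) + \int_0^\infty\!\int_\Omega \tfrac{R\phi}{u+1} + \int_0^\infty\!\int_{\partial\Omega}\tfrac{\nabla u \cdot \nu}{u+1}\phi \ \geq \ 0 \qquad (\star)
\end{equation*}
for every admissible $\phi \geq 0$. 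Specializing $(\star)$ to $\phi \in C^\infty_c(\Omega \times (0,\infty))$ kills the first and third terms and yields $R \geq 0$ pointwise in $\Omega \times (0, \infty)$. Separately, differentiating (\ref{wsol:mass_property}) in $T$, substituting $u_t = \Delta u - \chi\nabla\cdot(\tfrac{u}{v}\nabla v) - uv + \rho u - \mu u^2 + R$ and integrating by parts using $\nabla v \cdot \nu = 0$ gives $\int_\Omega R + \int_{\partial\Omega}\nabla u \cdot \nu \leq 0$, and hence $\int_{\partial\Omega}\nabla u \cdot \nu \leq -\int_\Omega R \leq 0$, for a.e.\ $t$.

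The main obstacle is turning these one-sided controls into pointwise equalities. For this I test $(\star)$ by $\phi(x,t) = \psi(t)\eta_n(x)$ (with $\phi(\cdot,0) = 0$), where $\psi \geq 0$ is a time cutoff and $\eta_n$ is a nonnegative bump concentrated near an arbitrary boundary point $x_0 \in \partial\Omega$, built in local tubular coordinates $(x',d)$ with $d = \mathrm{dist}(\cdot, \partial\Omega)$ as $\eta_n(x) = f(x')g_n(d)$ where $g_n(0) = 1$, $g_n'(0) = 0$ (which ensures $\nabla \eta_n \cdot \nu = 0$) and $\mathrm{supp}\,g_n \subseteq [0, 1/n]$. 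As $n \to \infty$ the boundary trace of $\eta_n$ stays equal to $f$ while the interior integral $\int_\Omega \tfrac{R}{u+1}\eta_n$ vanishes (the support of $\eta_n$ in $\Omega$ has vanishing Lebesgue measure); passing to the limit in $(\star)$ then gives $\int_{\partial\Omega}\tfrac{\nabla u \cdot \nu}{u+1} f \geq 0$ for arbitrary nonneg $f$ supported near $x_0$, hence $\nabla u \cdot \nu \geq 0$ pointwise on $\partial\Omega$. Combined with $\int_{\partial\Omega}\nabla u \cdot \nu \leq 0$ this forces $\nabla u \cdot \nu \equiv 0$, and then $\int_\Omega R \leq 0$ together with $R \geq 0$ pointwise yields $R \equiv 0$. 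Thus $u$ solves the first PDE classically and (\ref{boundary_conditions}) follows since $\nabla u \cdot \nu = 0 = \chi \tfrac{u}{v}\nabla v \cdot \nu$ on $\partial\Omega$.

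For the initial condition on $u$, I return to $(\star)$ with $\phi(\cdot,0) \not\equiv 0$: since $R \equiv 0$ and $\nabla u \cdot \nu \equiv 0$, it collapses to $\int_\Omega[\ln(u(\cdot,0)+1) - \ln(u_0+1)]\phi(\cdot,0) \geq 0$ for all admissible $\phi(\cdot,0) \geq 0$, so $u(\cdot,0) \geq u_0$ pointwise; this combined with $\int_\Omega u(\cdot,0) \leq \int_\Omega u_0$ (which follows from (\ref{wsol:mass_property}) and continuity at $t=0$) forces $u(\cdot,0) = u_0$. The technical core of the proof is the construction of $\eta_n$ and the limit passage in the concentration step, which is routine once one works in a tubular neighborhood of $\partial\Omega$.
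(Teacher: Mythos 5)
Your overall strategy is the same as the paper's: reverse the integration by parts in (\ref{wsol:ln_u_inequality}) to obtain the one-sided pointwise inequality $R\ge 0$ in $\Omega\times(0,\infty)$ and the one-sided boundary inequality $\nabla u\cdot\nu\ge \chi\frac{u}{v}\nabla v\cdot\nu=0$ (the paper likewise localizes test functions in the interior and near $\partial\Omega$), then play these against the integrated mass inequality (\ref{wsol:mass_property}) to force equality, and finally recover $u(\cdot,0)=u_0$ by combining the $\ge$ obtained from time-concentrated test functions with the $\le$ on masses coming from (\ref{wsol:mass_property}) and continuity. The only structural difference is cosmetic: the paper settles the initial condition first and phrases the last step as a contradiction, while you argue directly.

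One step, as justified, is invalid: you cannot obtain $\int_\Omega R+\int_{\partial\Omega}\nabla u\cdot\nu\le 0$ by \enquote{differentiating (\ref{wsol:mass_property}) in $T$} --- an inequality $F(T)\le G(T)$ between two $C^1$ functions gives no pointwise information about $F'$ versus $G'$. The correct route is the integrated one: substituting $u_t=\Delta u-\chi\nabla\cdot(\tfrac{u}{v}\nabla v)-uv+\rho u-\mu u^2+R$ into $\int_\Omega u(\cdot,T)-\int_\Omega u(\cdot,0)=\int_0^T\int_\Omega u_t$ and comparing with (\ref{wsol:mass_property}) yields
\begin{equation*}
\int_0^T\left[\int_\Omega R+\int_{\partial\Omega}\nabla u\cdot\nu\right]\le \int_\Omega u(\cdot,0)-\int_\Omega u_0\le 0,
\end{equation*}
where the last inequality uses $\int_\Omega u(\cdot,0)\le\int_\Omega u_0$ (which you only invoke at the very end, so it must be moved forward; it follows from (\ref{wsol:mass_property}) and continuity at $t=0$, independently of the rest). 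Since the integrand is nonnegative by your pointwise results, it vanishes identically, which is exactly the conclusion you want. With this repair --- which costs nothing and stays entirely within your own toolkit --- the proof is complete and matches the paper's.
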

\begin{proof}
	As (\ref{wsol:v_inequality}) is a fairly standard weak solution formulation for the second equation in (\ref{problem}) and therefore well-known arguments directly apply to show that $v$ is in fact a classical solution of said equation, we will focus our efforts here on the $u$ component and the two inequalities (\ref{wsol:mass_property}) and (\ref{wsol:ln_u_inequality}).
	\\[0.5em]
	As our first step, let us verify that $u$ satisfies its initial conditions. For this, we first fix a nonnegative $\psi \in C_0^\infty(\Omega)$ and a sequence of cut-off functions $(\zeta_i)_{i\in\N} \subseteq C_0^\infty([0,\infty))$ with 
	\[
		\zeta_i(t) \in [0,1] \;\; \text{ for all } t\in[0,\infty), \;\;\;\; \zeta_i(0) = 1, \;\;\;\; \supp(\zeta_i) \subseteq [0, \tfrac{1}{i}] \stext{ and } \zeta_i' \leq 0 \;\;\;\; \text{ for all }i\in\N
	\]
	in the same way as in \cite{MR3383312}.
	We then define $\phi_i(x,t) \defs \psi(x)\zeta_i(t)$ for all $i\in\N,x\in\Omega,t\in[0,\infty)$, which are of appropriate regularity to be test functions for (\ref{wsol:ln_u_inequality}). If we then plug these test functions into (\ref{wsol:ln_u_inequality}) and take the limit $i \rightarrow \infty$, we gain that
	\[
		\int_\Omega \ln(u(\cdot,0) + 1)\psi - \int_\Omega \ln(u_0 + 1)\psi \geq 0 \;\;\;\; \text{ for all } x \in \Omega
	\]
	due to the dominated convergence theorem and the sequence $(\zeta'_i)_{i\in\N}$ approaching the Dirac measure $-\delta(t)$. This and the fact that $\ln(\cdot + 1)$ is monotonically increasing then directly imply
	\[
		u(x,0) \geq u_0(x) \;\;\;\; \text{ for all } x \in \Omega. \numberthis \label{eq:u_initial_ineq}
	\]
	Because of (\ref{wsol:mass_property}) and the continuity of $u$, we further gain that
	\[
		\int_\Omega u(\cdot,0) \leq \int_\Omega u_0,
	\] 
	which then together with (\ref{eq:u_initial_ineq}) gives us
	\begin{equation}
		u(x,0) = u_0(x) \label{eq:initial_data_works}
	\end{equation}
	for all $x \in \Omega$.
	\\[0.5em]
	By reversing the partial integration steps that would lead from testing the first equation in (\ref{problem}) with $\frac{\phi}{u+1}$ to (\ref{wsol:ln_u_inequality}) and applying a straightforward density argument, we immediately see that $u$ satisfies
	\[
		\frac{u_t}{u+1} \geq \frac{\laplace u}{u + 1} - \frac{\chi \div (\frac{u}{v} \grad v)}{u+1} - \frac{uv}{u+1} + \rho \frac{u}{u+1} - \mu \frac{u^2}{u+1}
	\] 
	or 
	\[
		u_t \geq \laplace u - \chi \div \left(\tfrac{u}{v} \grad v\right) - uv + \rho u - \mu u^2 \numberthis \label{eq:comp1_ineq}
	\]
	after multiplication with $u+1 > 0$ on $\Omega\times(0,\infty)$. A similar density argument with $\phi$ supported near the boundary then further yields 
	\[
		\grad u \cdot \nu \geq \chi \tfrac{u}{v} \grad v \cdot \nu \numberthis \label{eq:grad_ienq}
	\]
	on $\partial\Omega\times(0,\infty)$ in a similar fashion. 
	\\[0.5em]
	Let us now assume that $u$ is not a classical solution of the first equation in (\ref{problem}). Because of continuity, there then exist open sets $U_1\subseteq \Omega,V_1 \subseteq [0,\infty)$ such that
	\[
		u_t > \laplace u -  \chi \div \left(\tfrac{u}{v} \grad v\right) - uv + \rho u - \mu u^2 \;\;\;\; \text{on } U_1\times V_1
	\]
	or open sets $U_2 \subseteq \partial \Omega, V_2 \subseteq [0,\infty)$ such that
	\[
		\grad u \cdot \nu > \chi \tfrac{u}{v} \grad v \cdot \nu \;\;\;\;\text{on } U_2\times V_2.
	\]
	or both.
	In the latter case, this combined with (\ref{eq:initial_data_works}) then implies that
	\begin{align*}
		\int_\Omega u(\cdot, T) - \int_\Omega u_0 = \int_0^T\int_\Omega u_t &\geq \int_0^T\int_{\partial\Omega} (\grad u \cdot \nu - \chi\tfrac{u}{v}\grad v \cdot \nu) - \int_0^T \int_\Omega uv + \rho\int_0^T \int_\Omega u - \mu \int_\Omega u^2 \\
		&> - \int_0^T \int_\Omega uv + \rho\int_0^T \int_\Omega u - \mu \int_\Omega u^2
	\end{align*}
	for all $T \in V_2$ after some partial integration steps, which contradicts (\ref{wsol:mass_property}). A similar contradiction can be derived for the remaining case and as such $u$ must solve the first equation in (\ref{problem}) classically. This completes the proof.
\end{proof}
\noindent After having now established that existence of these generalized solutions is in fact desirable, let us now proceed to laying the groundwork for their construction. 
To do this, we first fix a family of cut-off functions $(\eta_\eps)_{\eps \in (0,1)}$ with
\[
	\eta_\eps \in C_0^\infty([0,\infty)) \stext{such that} 0 \leq \eta_\eps \leq 1 \text{ in } [0,\infty) \stext{and} \eta_\eps \nearrow 1 \text{ pointwise in } [0,\infty) \text{ as } \eps \searrow 0.
\]
We then use these to define the following approximated and regularized version of (\ref{problem}) with (\ref{boundary_conditions}) and (\ref{intial_conditions}):
\begin{equation}
\left\{
\begin{aligned}
	{u_\eps}_t &= \laplace u_\eps - \chi\div ( \eta_\eps(u_\eps) \tfrac{u_\eps}{v_\eps}  \grad v_\eps) - u_\eps v_\eps + \rho u_\eps - \mu u_\eps^2,  && x \in \Omega, t > 0 \\
	{v_\eps}_t &= \laplace v_\eps - v_\eps + u_\eps v_\eps, && x \in \Omega, t > 0 \\
	\grad u_\eps \cdot \nu &= 0, \;\; \grad v_\eps \cdot \nu = 0, && x \in \partial\Omega, t > 0 \\
u_\eps(x,0) &= u_0(x), \;\;  v_\eps(x,0) = v_0(x), && x \in \overline{\Omega} \\ 
\end{aligned}
\right. \label{approx_problem} .
\end{equation}
This system or more precisely its solutions will play a key role in the construction of generalized solutions in the sense of \Cref{definition:weak_solution}. 
\\[0.5em]
As such, let us now briefly consider the changes made in (\ref{approx_problem}) as compared to (\ref{problem}), which, while small, do have substantial impact concerning the existence of global classical solutions to this system. This stems mostly from the fact that introducing a cut-off function into the taxis term allows us to gain a critical $L^\infty$ estimate for $u$ by straightforward comparison with a constant function. This is then enough to derive sufficient bounds to show that finite-time blow-up in all the necessary norms is impossible for a local solution gained by adaption of standard local existence theory. Let us now make this precise:
\begin{lemma}\label{lemma:approx_exist}
	For each $\eps \in (0,1)$ and initial data $(u_0, v_0)$ according to (\ref{intial_data_regularity}), there exist functions 
	\[
		u_\eps, v_\eps \in C^0(\overline{\Omega}\times[0,\infty))\cap C^{2,1}(\overline{\Omega}\times(0,\infty))
	\]
	such that \[
		u_\eps(x,t) \geq 0, \;\; v_\eps(x,t) \geq e^{-t} \inf_{y\in\Omega} v_0(y) > 0 \;\;\;\;\;\; \text{ for all } x \in \Omega, t \in [0, \infty) \numberthis \label{eq:approx_lower_bounds}
	\] and $(u_\eps, v_\eps)$ is a classical solution of (\ref{approx_problem}). 
\end{lemma}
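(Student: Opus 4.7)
The plan is to follow the standard three-step procedure for establishing global classical solvability of regularized taxis systems. First, I would construct a local classical solution $(u_\eps, v_\eps)$ on some maximal interval $[0, T_{\max,\eps})$ via a Banach fixed-point argument. Since $\eta_\eps$ has compact support, the map $s \mapsto s \eta_\eps(s)$ is smooth, bounded and globally Lipschitz, so the taxis coefficient $\eta_\eps(u_\eps) u_\eps/v_\eps$ stays well-behaved as long as $v_\eps$ is bounded away from zero, which is guaranteed on a sufficiently small time interval by continuity and $v_0 > 0$. A contraction mapping on $C^0(\overline\Omega \times [0, T])$ for small $T > 0$, followed by parabolic Schauder theory to upgrade regularity, yields the local classical solution together with the standard extensibility criterion: if $T_{\max,\eps} < \infty$, then $\|u_\eps(\cdot, t)\|_{L^\infty(\Omega)} + \|v_\eps(\cdot, t)\|_{W^{1,q}(\Omega)} \to \infty$ as $t \nearrow T_{\max,\eps}$ for some fixed $q > 2$.

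Next I would verify the pointwise bounds in (\ref{eq:approx_lower_bounds}). For nonnegativity of $u_\eps$, I would apply the parabolic maximum principle to the first equation with its zero-flux boundary condition; since the lower-order coefficients are locally bounded and $u_0 \geq 0$, nonnegativity propagates. For the lower bound on $v_\eps$, once $u_\eps \geq 0$ is known the second equation gives $v_{t,\eps} - \laplace v_\eps + v_\eps = u_\eps v_\eps \geq 0$, so the spatially constant function $\underline v(t) \defs e^{-t} \inf_{y \in \Omega} v_0(y)$ satisfies the corresponding equality with initial data below $v_0$ and is therefore a subsolution; the standard parabolic comparison principle then yields $v_\eps \geq \underline v > 0$.

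The crucial step is the $L^\infty$ bound on $u_\eps$, which is where the cut-off $\eta_\eps$ plays its role. Setting $R_\eps \defs \sup \supp(\eta_\eps)$ and $\phi_\eps(s) \defs s\eta_\eps(s)$, I would fix any $K$ with $K > R_\eps$, $K > \|u_0\|_{L^\infty(\Omega)}$ and $\mu K > \rho$, and claim $u_\eps \leq K$ on $\overline\Omega \times [0, T_{\max,\eps})$. If not, by continuity there is a first time $t_0 > 0$ at which $u_\eps$ touches $K$ at some $x_0 \in \overline\Omega$, and at this touching point one has $u_{t,\eps}(x_0, t_0) \geq 0$ and $\laplace u_\eps(x_0, t_0) \leq 0$. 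Since $K$ lies strictly outside $\supp(\eta_\eps)$, all derivatives of $\eta_\eps$ at $K$ vanish, so $\phi_\eps(K) = \phi_\eps'(K) = 0$; expanding the divergence $\chi \div(\phi_\eps(u_\eps) v_\eps^{-1} \grad v_\eps)$ then shows every resulting term contains a factor of $\phi_\eps(K)$ or $\phi_\eps'(K)$, so the entire taxis contribution vanishes at $(x_0, t_0)$. The first equation then forces $u_{t,\eps}(x_0, t_0) \leq -K v_\eps(x_0, t_0) + \rho K - \mu K^2 \leq (\rho - \mu K) K < 0$, a contradiction. With this $L^\infty$ bound on $u_\eps$ uniform on each $[0, T]$ with $T < T_{\max,\eps}$, semigroup estimates applied to the second equation yield uniform $W^{1,q}$ bounds for $v_\eps$, violating the extensibility criterion and forcing $T_{\max,\eps} = \infty$. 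I expect the main technical obstacle to be this comparison step: one must verify that the expansion of the cut-off taxis term genuinely produces only factors of $\phi_\eps(K)$ and $\phi_\eps'(K)$ at the touching point (including the case $x_0 \in \partial\Omega$), but once this bookkeeping is in place the remainder of the argument is routine.
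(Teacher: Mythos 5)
Your proposal is correct and follows essentially the same route as the paper: local existence with a blow-up criterion, nonnegativity and the lower bound for $v_\eps$ via maximum/comparison principles (the paper obtains the latter from the mild solution representation with respect to $e^{t(\laplace-1)}$, which is equivalent to your subsolution comparison), an $L^\infty$ bound for $u_\eps$ by comparison with a constant lying above $\supp(\eta_\eps)$, and finally $L^\infty$ and gradient bounds for $v_\eps$ by ODE comparison and semigroup smoothing. Your touching-point argument is just the unpacked version of the paper's ``standard comparison argument with an appropriate constant function,'' and the minor discrepancies in your stated extensibility criterion (omitting the $\inf_x v_\eps \to 0$ scenario, using $W^{1,q}$ instead of $W^{1,\infty}$) are harmless since you establish the corresponding bounds anyway.
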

\begin{proof}
	A standard contraction mapping argument adapted from e.g.\ \cite{MR2146345} immediately gives us a local solution of (\ref{approx_problem}) on $[0,\tmaxeps)$ for a maximal $\tmaxeps \in (0,\infty]$ and the following blow-up criterion:
	\begin{align*}
		&\text{If } \tmaxeps < \infty, \\
		&\text{then } \limsup_{t\nearrow \tmaxeps} \left\{ \, \|u_\eps(\cdot, t)\|_\L{\infty} + \|v_\eps(\cdot, t)\|_{W^{1,\infty}(\Omega)} \, \right\} = \infty \text{ or } \liminf_{t\nearrow \tmaxeps} \inf_{x\in\Omega} v_\eps(x,t) = 0
		\numberthis \label{eq:blow-up}
	\end{align*} 
	Nonnegativity of $u_\eps$ and $v_\eps$ then immediately follows by maximum principle. Further, by analyzing $v_\eps$ using its mild solution representation (relative to the semigroup $e^{t(\laplace - 1)}$), we see that
	\begin{equation}
		v_\eps(\cdot,t) = e^{t(\laplace - 1)}v_0 + \int_0^t e^{(t-s)(\laplace - 1)} u_\eps(\cdot,s) v_\eps(\cdot,s) \d s \geq e^{t(\laplace - 1)}v_0 \geq e^{-t}\inf_{x\in\Omega} v_0(x) \;\;\;\; \text{ for all } t \in [0,\tmaxeps). \label{eq:lower_bound_v}
	\end{equation}
	This lower bound for $v$ will not only be necessary for almost all of the following arguments, but also  implies the second property in (\ref{eq:approx_lower_bounds}) after we have demonstrated that $\tmaxeps = \infty$ later in this proof.
	\\[0.5em]
	To now show that finite-time blow-up is impossible, let us first assume the opposite, namely that $\tmaxeps < \infty$. The property (\ref{eq:lower_bound_v}) then immediately gives us a positive lower bound for $v_\eps$ on $\Omega\times[0,\tmaxeps)$, which already rules out one of the possible blow-up scenarios.
	As further $\eta_\eps(u)$ is zero and $u \mapsto \rho u - \mu u^2$ is negative for sufficiently big values of $u$, a standard comparison argument applied to the first equation in (\ref{approx_problem}) with an appropriate constant function gives us $K_1 > 0$ with 
	\[
		\|u_\eps(\cdot, t)\|_\L{\infty} \leq K_1 \;\;\;\; \text{ for all } t \in [0,\tmaxeps)
	\]
	making another blow-up scenario impossible.
	This then in turn allows us to use a similar comparison argument for the second equation in (\ref{approx_problem}) to gain $K_2 > 0$ such that
	\[
		\|v_\eps(\cdot, t)\|_\L{\infty} \leq K_2 \;\;\;\; \text{ for all } t \in [0,\tmaxeps)
	\]
	by comparing with the solution to the initial value problem $y'(t) = (K_1 - 1) y(t)$, $t \in [0,\tmaxeps)$, $y(0) = \|v_0\|_\L{\infty}$ extended in such a way as to be constant on each $\Omega\times\{t\}$ for all $t \in [0,\tmaxeps)$. Lastly, another use of the mild solution representation of $v_\eps$ in tandem with well-known smoothness estimates for the semigroup~$(e^{t\laplace})_{t\geq 0}$ gives us
	\[
		\|\grad v(\cdot, t)\|_\L{\infty} \leq K_3\|\grad v_0\|_\L{\infty} + K_3 \int_0^t (t-s)^{-\frac{1}{2}}e^{(-1 -\lambda)(t-s)} \|u\|_\L{\infty}\|v\|_\L{\infty} \leq K_4
	\]
	for all $t \in [0, \tmaxeps)$ and appropriate constants $K_3, K_4 > 0$. This rules out the final possible blow-up scenario in (\ref{eq:blow-up}), which implies that our assumption $\tmaxeps < \infty$ must have been wrong. As such, we have proven that $\tmaxeps = \infty$, which completes the proof.
\end{proof}
\noindent
For the rest of this paper, we now fix some initial data $(u_0, v_0)$ according to (\ref{intial_data_regularity}) and a corresponding family of approximate solutions $(u_\eps, v_\eps)_{\eps \in (0,1)}$ as constructed in \Cref{lemma:approx_exist}.

\section{A priori estimates}
\label{section:apriori}
This section will be mostly concerned with deriving a priori bounds for the approximate solutions that we fixed in the previous section as preparation for later convergence arguments. We start this process by combining the first two equations in (\ref{problem}) (because the $-u_\eps v_\eps$ in the first equation will cancel out its counterpart in the second equation) to gain some important baseline estimates:
\begin{lemma}
	\label{lemma:l1_l2_props}
	There exists $C_1 > 0$ such that
	\begin{equation}
		\|u_\eps(\cdot, t)\|_\L{1} \leq C_1, \;\; \|v_\eps(\cdot, t)\|_\L{1} \leq C_1 \;\;\;\; \text{ for all } t > 0 \label{eq:l1_bounded} 
	\end{equation}
	and, for each $T > 0$, there exists $C_2(T) > 0$ such that
	\begin{equation}
	    \int_0^T \int_\Omega u_\eps^2 \leq C_2(T)
		\label{eq:u_l2_weak_boundedness}
	\end{equation}
	for all $\eps \in (0,1)$.
\end{lemma}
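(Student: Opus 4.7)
The plan is to derive all three bounds from the integrated first equation and from its combination with the second. Integrating the first PDE of (\ref{approx_problem}) over $\Omega$ and using the Neumann conditions to eliminate both the Laplacian and the taxis term after the divergence theorem yields
\begin{equation*}
\tfrac{d}{dt}\int_\Omega u_\eps = -\int_\Omega u_\eps v_\eps + \rho \int_\Omega u_\eps - \mu \int_\Omega u_\eps^2 .
\end{equation*}
Since $u_\eps, v_\eps \geq 0$ by \Cref{lemma:approx_exist}, the first term on the right is nonpositive and may be dropped, while Cauchy--Schwarz yields $\bigl(\int_\Omega u_\eps\bigr)^2 \leq |\Omega|\int_\Omega u_\eps^2$. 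Together these produce a logistic ODI for $y(t) \defs \|u_\eps(\cdot,t)\|_\L{1}$ of the form $y' \leq \rho y - \tfrac{\mu}{|\Omega|} y^2$, so a standard ODE comparison immediately gives the time-uniform bound $\|u_\eps(\cdot,t)\|_\L{1} \leq C$ independent of $\eps$ and $t$.

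Next, following the hint in the preamble of this lemma, I would add the two PDEs in (\ref{approx_problem}) before integrating, so that the $\pm u_\eps v_\eps$ terms cancel exactly. Together with the no-flux boundary conditions, this gives the key identity
\begin{equation*}
\tfrac{d}{dt}\int_\Omega(u_\eps + v_\eps) + \int_\Omega v_\eps + \mu \int_\Omega u_\eps^2 = \rho \int_\Omega u_\eps \leq |\rho|\,C
\end{equation*}
thanks to the $L^1$ bound on $u_\eps$ just derived. Writing $\int_\Omega v_\eps = z(t) - \int_\Omega u_\eps \geq z(t) - C$ for $z(t) \defs \int_\Omega(u_\eps + v_\eps)$ then reduces this to a differential inequality $z'(t) + z(t) \leq C'$, and another ODE comparison supplies the missing time-uniform $L^1$ bound for $v_\eps$, thereby finishing (\ref{eq:l1_bounded}).

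Finally, (\ref{eq:u_l2_weak_boundedness}) follows by integrating the very same identity over $[0,T]$: after discarding the nonnegative quantities $z(T)$ and $\int_0^T\int_\Omega v_\eps$, one is left with the estimate
\begin{equation*}
\mu \int_0^T \int_\Omega u_\eps^2 \leq z(0) + |\rho| \int_0^T \int_\Omega u_\eps \leq z(0) + |\rho| C T ,
\end{equation*}
which is bounded above by a constant $C_2(T)$ depending at most linearly on $T$. I do not foresee any genuine obstacle here: the crucial structural cancellation between the $-u_\eps v_\eps$ term in the first equation and the $+u_\eps v_\eps$ term in the second, with the logistic dissipation $-\mu u_\eps^2$ doing the actual bounding, is precisely the interplay already emphasized in the introduction, so the remainder of the argument is a pair of routine ODE comparisons.
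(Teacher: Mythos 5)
Your proof is correct and follows essentially the same route as the paper: the decisive step in both is adding the two equations of (\ref{approx_problem}) so that the $\pm u_\eps v_\eps$ terms cancel and the logistic dissipation $-\mu\int_\Omega u_\eps^2$ controls the rest. The only (harmless) difference is that you first extract the $L^1$ bound for $u_\eps$ from a logistic ODI for $\int_\Omega u_\eps$ obtained from the first equation alone and then feed it into the combined identity, whereas the paper absorbs $|\rho|\int_\Omega u_\eps$ directly into $-\mu\int_\Omega u_\eps^2$ via Young's inequality and thereby obtains both $L^1$ bounds at once from the single differential inequality $\frac{\d}{\d t}\int_\Omega(u_\eps+v_\eps)\leq -\int_\Omega(u_\eps+v_\eps)+\frac{(|\rho|+1)^2}{4\mu}|\Omega|$.
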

\begin{proof}
	As our first step, we add the equations for $u$ and $v$ together and then integrate to gain that
	\begin{align*}
		\frac{\d }{\d t}\int_\Omega (u_\eps + v_\eps) &= -\int_\Omega v_\eps + \rho\int_\Omega u_\eps - \mu \int_\Omega u_\eps^2 \\
		&\leq -\int_\Omega v_\eps + |\rho|\int_\Omega u_\eps - \mu \int_\Omega u_\eps^2 \;\;\;\; \text{ for all } t > 0 \text{ and } \eps \in (0,1) \label{eq:l1_added}  \numberthis
	\end{align*}
	after partial integration and use of the boundary conditions. By Young's inequality, we see that
	\[
		\int_\Omega u_\eps \leq \frac{\mu}{|\rho| + 1}\int_\Omega u_\eps^2 + \frac{|\rho| + 1}{4\mu}|\Omega|
	\]
	or further that
	\[
		-\mu\int_\Omega u_\eps^2 \leq -(|\rho| + 1) \int_\Omega u_\eps + \frac{(|\rho| + 1)^2}{4\mu}|\Omega|
	\]
	for all $t > 0$ and $\eps \in (0,1)$. If we now apply this to (\ref{eq:l1_added}), we gain that
	\[
		\frac{\d }{\d t}\int_\Omega (u_\eps + v_\eps) \leq - \int_\Omega (u_\eps + v_\eps) + \frac{(|\rho| + 1)^2}{4\mu}|\Omega|
	\]
	for all $t > 0$ and $\eps \in (0,1)$. This immediately implies (\ref{eq:l1_bounded}) by a straightforward comparison argument with the constant 
	\[
		C_1 \defs \max\left(  \frac{(|\rho| + 1)^2}{4\mu}|\Omega|, \int_\Omega (u_0 + v_0) \right).
	\] 
	If we now slightly rearrange (\ref{eq:l1_added}) and integrate, we further see for each $T > 0$ that
	\[
		\mu \int_0^T \int_\Omega u_\eps^2 \leq \int_\Omega (u_0 + v_0) + T|\rho| C_1
	\]
	for all $\eps \in (0,1)$, which then directly implies our second result (\ref{eq:u_l2_weak_boundedness}) because $\mu > 0$.
\end{proof}
\noindent Having now established the baseline estimates above, we can proceed to deriving the linchpin for our main existence result of this paper, namely $L^p$ bounds for $v_\eps$ and an integrability property for terms of the form $\int_\Omega|\grad v_\eps^{p/2}|^2$ for all finite $p$. The argument used for this mainly rests on the integrability property for $\int_\Omega u_\eps^2$ afforded to us by the logistic source term in the first equation in (\ref{approx_problem}) and gained in the previous lemma. As such, the following lemma presents the key insight in this paper of how to use the logistic source term in the first equation to temper the influence of the $u_\eps v_\eps$ growth term in the second equation and gain just about enough a priori estimates for the construction of generalized solutions in a two-dimensional setting.
\begin{lemma}\label{lemma:v_bounds}
	For each $T > 0$ and $p > 1$, there exists $C(T,p) > 0$ such that
	\[
		\|v_\eps(\cdot, t)\|_\L{p} \leq C(T,p) \;\;\;\; \text{ for all } t \in (0,T]
	\]
	and
	\[
		\int_0^T \int_\Omega |\grad v_\eps^\frac{p}{2}|^2 \leq C(T,p)
	\]
	for all $\eps \in (0,1)$. 
\end{lemma}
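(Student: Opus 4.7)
The plan is to test the second equation in (\ref{approx_problem}) with $v_\eps^{p-1}$ and exploit the control $\int_0^T\int_\Omega u_\eps^2 \leq C_2(T)$ from \Cref{lemma:l1_l2_props} together with a two-dimensional Gagliardo--Nirenberg inequality to absorb the dangerous $u_\eps v_\eps^p$ term into the dissipation, leaving an ODI amenable to Gronwall.

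Concretely, I would multiply ${v_\eps}_t = \laplace v_\eps - v_\eps + u_\eps v_\eps$ by $v_\eps^{p-1}$ (which is legal since $v_\eps$ is smooth and positive), integrate by parts using the no-flux boundary condition, and obtain the identity
\[
\tfrac{1}{p}\tfrac{\d}{\d t}\int_\Omega v_\eps^p + \tfrac{4(p-1)}{p^2}\int_\Omega |\grad v_\eps^{p/2}|^2 + \int_\Omega v_\eps^p = \int_\Omega u_\eps v_\eps^p.
\]
The right-hand side is where the main obstacle lies, since it couples $u_\eps$ and $v_\eps$ nonlinearly; I would estimate it by Cauchy--Schwarz as $\|u_\eps\|_{\L{2}}\|v_\eps^{p/2}\|_{\L{4}}^2$, then invoke the two-dimensional Gagliardo--Nirenberg inequality
\[
\|w\|_{\L{4}}^2 \leq C_{\mathrm{GN}}\bigl(\|\grad w\|_{\L{2}}\|w\|_{\L{2}} + \|w\|_{\L{2}}^2\bigr)
\]
with $w = v_\eps^{p/2}$. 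This is precisely the step where being in dimension two is critical.

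A single application of Young's inequality then absorbs the term containing $\|\grad v_\eps^{p/2}\|_{\L{2}}$ into the dissipation on the left-hand side, leaving
\[
\tfrac{\d}{\d t}\int_\Omega v_\eps^p + \tfrac{2(p-1)}{p}\int_\Omega |\grad v_\eps^{p/2}|^2 \leq K(p)\bigl(\|u_\eps\|_{\L{2}}^2 + \|u_\eps\|_{\L{2}} + 1\bigr)\int_\Omega v_\eps^p
\]
for some constant $K(p) > 0$, where I have used $\|v_\eps^{p/2}\|_{\L{2}}^2 = \int_\Omega v_\eps^p$.

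The function $t \mapsto \|u_\eps(\cdot,t)\|_{\L{2}}^2 + \|u_\eps(\cdot,t)\|_{\L{2}} + 1$ lies in $L^1(0,T)$ uniformly in $\eps$ by (\ref{eq:u_l2_weak_boundedness}) and Cauchy--Schwarz in time, so Gronwall's lemma yields
\[
\int_\Omega v_\eps^p(\cdot,t) \leq \int_\Omega v_0^p \cdot \exp\bigl(K(p)\textstyle\int_0^T(\|u_\eps\|_{\L{2}}^2 + \|u_\eps\|_{\L{2}} + 1)\d s\bigr) \leq \widetilde C(T,p)
\]
for all $t \in [0,T]$ and all $\eps \in (0,1)$; note that $\int_\Omega v_0^p < \infty$ because $v_0 \in W^{1,\infty}(\Omega)$. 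Integrating the above differential inequality on $[0,T]$ then produces the claimed space-time bound on $\int_0^T\int_\Omega |\grad v_\eps^{p/2}|^2$, with $C(T,p)$ taken as the maximum of the two resulting constants.
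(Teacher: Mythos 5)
Your argument is correct and follows essentially the same route as the paper: testing with $v_\eps^{p-1}$, Cauchy--Schwarz, the two-dimensional Gagliardo--Nirenberg inequality, Young's inequality to absorb the gradient term, and Gronwall driven by the space-time integrability of $\int_\Omega u_\eps^2$. The only (harmless) cosmetic difference is that the paper takes the Gagliardo--Nirenberg remainder in the $L^{2/p}$-norm so that it becomes an additive term controlled by the $L^1$ bound on $v_\eps$, whereas you keep it in $L^2$ and fold it into the multiplicative Gronwall factor.
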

\begin{proof}
	Fix $p > 1$. Using the well-known Gagliardo--Nirenberg inequality, we can then fix $K_1 > 0$ such that
	\[
		\|\phi\|^2_\L{4} \leq K_1 \|\grad \phi\|_\L{2}\|\phi\|_\L{2} + K_1\|\phi\|^2_\L{\frac{2}{p}}
	\]
	for all $\phi \in C^1(\Omega)$. Now testing the second equation in (\ref{approx_problem}) with $v_\eps^{p-1}$ and applying the above inequality results in
	\begin{align*}
		\frac{1}{p} \frac{\d }{\d t} \int_\Omega v_\eps^p =& -\frac{4(p-1)}{p^2}\int_\Omega |\grad v_\eps^\frac{p}{2}|^2 - \int_\Omega v_\eps^p + \int_\Omega u_\eps v_\eps^p \\
		\leq& -\frac{4(p-1)}{p^2}\int_\Omega |\grad v_\eps^\frac{p}{2}|^2 + \|u_\eps\|_\L{2}\|v_\eps^\frac{p}{2}\|^2_\L{4} \\
		\leq& -\frac{4(p-1)}{p^2}\int_\Omega |\grad v_\eps^\frac{p}{2}|^2 + K_1\|u_\eps\|_\L{2}\|\grad v_\eps^\frac{p}{2}\|_\L{2}\|v_\eps^\frac{p}{2}\|_\L{2} + K_1\|u_\eps\|_\L{2}\|v_\eps^\frac{p}{2}\|^2_\L{\frac{2}{p}} \numberthis \label{eq:vp_diff_ineq_proto}
	\end{align*}
	for all $t > 0$ and $\eps \in (0,1)$.
	Because of \Cref{lemma:l1_l2_props}, there exists a constant $K_2 > 0$ such that
	\[
		\|v^\frac{p}{2}_\eps\|^2_\L{\frac{2}{p}} = \left\{\int_\Omega v_\eps \right\}^p \leq K_2^p \;\;\;\;  \text{ for all } t > 0 \text{ and } \eps \in (0,1)
	\]
	and therefore we can improve (\ref{eq:vp_diff_ineq_proto}) using Young's inequality as follows:
	\begin{align}
		\frac{1}{p} \frac{\d }{\d t} \int_\Omega v_\eps^p \leq&-\frac{p-1}{p^2}\int_\Omega |\grad v_\eps^\frac{p}{2}|^2  + K_3\left\{ \int_\Omega u_\eps^2 \right\} \int_\Omega v_\eps^p + K_4\left(1+\int_\Omega u_\eps^2\right) \label{eq:vp_diff_ineq} 
	\end{align}
	for all $t > 0$ and $\eps \in (0,1)$ and with $K_3 \defs \frac{K_1^2 p^2}{12(p-1)}$ and $K_4 \defs \frac{1}{2} K_1 K_2^p$. 
	\\[0.5em]
	We now fix $T > 0$. Because we then know from \Cref{lemma:l1_l2_props} that there exists $K_5 > 0$ such that 
	\[
		\int_0^T \int_\Omega u_\eps^2 \leq K_5
	\]
	for all $\eps \in (0,1)$, integration of (\ref{eq:vp_diff_ineq}) gives us 
	\[
		\int_\Omega v_\eps^p(\cdot, t) + \frac{p-1}{p}\int_0^t\int_\Omega |\grad v_\eps^\frac{p}{2}|^2  \leq K_6 + p K_3 \int_0^t \left\{ \int_\Omega u_\eps^2 \right\} \int_\Omega  v_\eps^p\label{eq:vp_int_ineq} \numberthis 
	\]
	with $K_6 \defs pK_4(T + K_5) + \int_\Omega v_0^p$ for all $t\in[0,T]$ and $\eps \in (0,1)$. This then implies
	\[
		\int_\Omega v_\eps^p(\cdot, t) \leq K_6 \exp\left( pK_3 \int_0^t \int_\Omega u_\eps^2  \right) \leq K_6e^{pK_3 K_5} =: K_7
	\]
	for all $t \in (0,T]$ and $\eps \in (0,1)$ by Gronwall's inequality,
	which is our first desired result. By now combining this new $L^p$ bound for $v_\eps$ with (\ref{eq:vp_int_ineq}), we then further see that
	\[
		\int_0^T\int_\Omega |\grad v_\eps^\frac{p}{2}|^2 \leq \frac{p}{p - 1} \left[ K_6 + pK_3 K_5 K_7 \right]
	\]
	for all $\eps \in (0,1)$.
\end{proof}
\noindent As the $L^p$ bounds established for $v_\eps$ in the above lemma seem to be insufficient to gain higher $L^p$ bounds for $u_\eps$ or its derivatives due to the taxis term in the first equation of (\ref{approx_problem}), we will instead restrict ourselves to establishing bounds for $\ln(u_\eps + 1)$ and its derivatives as is not uncommon for this type of problem. Given that the $L^1$ bound for $u_\eps$ found in \Cref{lemma:l1_l2_props} already gives us all possible $L^p$ bounds with finite $p$ for $\ln(u_\eps + 1)$, we will focus in the following lemma on establishing a useful, albeit fairly weak bound for the first derivatives of $\ln(u_\eps + 1)$. This is mostly made possible by the integrability properties for $\int_\Omega|\grad v_\eps|^2$ and baseline estimates for $u_\eps$ and $v_\eps$ already derived in this section.
\begin{lemma}\label{lemma:grad_ln_u_bound}
	For each $T > 0$, there exists $C(T) > 0$ such that
	\[
		\int_0^T\int_\Omega \frac{|\grad u_\eps|^2}{(u_\eps + 1)^2} \leq C(T)
	\]
	for all $\eps \in (0,1)$. 
\end{lemma}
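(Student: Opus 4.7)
The natural approach is to test the first equation in (\ref{approx_problem}) by $\frac{1}{u_\eps + 1}$. Because $\grad \frac{1}{u_\eps + 1} = -\frac{\grad u_\eps}{(u_\eps + 1)^2}$, the diffusion term, after integration by parts and using the no-flux boundary condition, produces exactly the good quantity $\int_\Omega \frac{|\grad u_\eps|^2}{(u_\eps+1)^2}$, while the left-hand side collapses to $\frac{\d}{\d t}\int_\Omega \ln(u_\eps + 1)$. Thus one obtains an identity of the schematic form
\[
    \frac{\d}{\d t}\int_\Omega \ln(u_\eps + 1) + \int_\Omega \frac{|\grad u_\eps|^2}{(u_\eps+1)^2} = \chi \int_\Omega \frac{\eta_\eps(u_\eps)\,u_\eps\,\grad v_\eps \cdot \grad u_\eps}{v_\eps\,(u_\eps+1)^2} - \int_\Omega \frac{u_\eps v_\eps}{u_\eps+1} + \rho\int_\Omega \frac{u_\eps}{u_\eps+1} - \mu\int_\Omega \frac{u_\eps^2}{u_\eps+1}.
\]

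Next, the taxis contribution on the right is handled by Young's inequality: after absorbing a factor of $\tfrac{1}{2}\int_\Omega \frac{|\grad u_\eps|^2}{(u_\eps+1)^2}$ into the left-hand side and using the trivial bounds $\eta_\eps \leq 1$ and $\frac{u_\eps^2}{(u_\eps+1)^2} \leq 1$, only a term of the form $\frac{\chi^2}{2}\int_\Omega \frac{|\grad v_\eps|^2}{v_\eps^2}$ remains. On any time interval $[0,T]$ the pointwise lower bound $v_\eps \geq e^{-T}\inf_\Omega v_0 > 0$ supplied by \Cref{lemma:approx_exist} turns $\frac{1}{v_\eps^2}$ into a constant depending only on $T$, so it suffices to control $\int_0^T\int_\Omega |\grad v_\eps|^2$, which is precisely the case $p=2$ of \Cref{lemma:v_bounds}.

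Finally, the remaining terms on the right are harmless: $-\int_\Omega \frac{u_\eps v_\eps}{u_\eps+1}$ and $-\mu\int_\Omega \frac{u_\eps^2}{u_\eps+1}$ are nonpositive and can simply be discarded, while $\rho\int_\Omega \frac{u_\eps}{u_\eps+1} \leq |\rho|\,|\Omega|$ since $\frac{u_\eps}{u_\eps+1} \leq 1$. Integrating the resulting differential inequality over $(0,T)$ and using $\int_\Omega \ln(u_\eps(\cdot,T)+1) \geq 0$ together with the obvious bound $\int_\Omega \ln(u_0 + 1) \leq \int_\Omega u_0$ yields the claim.

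The only genuinely nontrivial step here is the taxis estimate; everything else is bookkeeping. In contrast to a standard chemotaxis setting with sensitivity $u$, the singular factor $\frac{1}{v_\eps}$ could in principle be dangerous, but this is neutralized on each finite time interval by the semigroup-based lower bound on $v_\eps$, and the remaining $\int_0^T\int_\Omega|\grad v_\eps|^2$ is exactly what the previous lemma was designed to provide.
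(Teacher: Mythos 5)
Your overall strategy coincides with the paper's (test the first equation with $\frac{1}{u_\eps+1}$, absorb the taxis term via Young's inequality using the lower bound on $v_\eps$ from \Cref{lemma:approx_exist} and the case $p=2$ of \Cref{lemma:v_bounds}), but your energy identity carries the Dirichlet term with the wrong sign, and this error propagates through the rest of the argument. Since $\grad\frac{1}{u_\eps+1}=-\frac{\grad u_\eps}{(u_\eps+1)^2}$, integration by parts gives $\int_\Omega\frac{\laplace u_\eps}{u_\eps+1}=+\int_\Omega\frac{|\grad u_\eps|^2}{(u_\eps+1)^2}$, so the correct schematic identity is
\[
	\frac{\d}{\d t}\int_\Omega\ln(u_\eps+1) \;-\; \int_\Omega\frac{|\grad u_\eps|^2}{(u_\eps+1)^2} \;=\; -\chi\int_\Omega\frac{\eta_\eps(u_\eps)u_\eps}{v_\eps(u_\eps+1)^2}\grad u_\eps\cdot\grad v_\eps - \int_\Omega\frac{u_\eps v_\eps}{u_\eps+1} + \rho\int_\Omega\frac{u_\eps}{u_\eps+1} - \mu\int_\Omega\frac{u_\eps^2}{u_\eps+1},
\]
with a minus, not a plus, in front of the gradient term: $\ln(\cdot+1)$ is concave, so diffusion \emph{produces} $\int_\Omega\ln(u_\eps+1)$ rather than dissipating along it (you appear to have pattern-matched to the $\int u\ln u$ computation, where the test function is increasing in $u$).

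Because of this, the conclusion must be extracted the other way around. After time integration one gets $\frac12\int_0^T\int_\Omega\frac{|\grad u_\eps|^2}{(u_\eps+1)^2}\leq \int_\Omega\ln(u_\eps(\cdot,T)+1)-\int_\Omega\ln(u_0+1)+\dots$, so what is needed at the endpoint is an \emph{upper} bound on $\int_\Omega\ln(u_\eps(\cdot,T)+1)$ — supplied by $\ln(s+1)\leq s$ and the $L^1$ bound of \Cref{lemma:l1_l2_props} — not the lower bound $\int_\Omega\ln(u_\eps(\cdot,T)+1)\geq 0$ and the estimate on $\int_\Omega\ln(u_0+1)$ that you invoke. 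Likewise the terms $-\int_\Omega\frac{u_\eps v_\eps}{u_\eps+1}$ and $-\mu\int_\Omega\frac{u_\eps^2}{u_\eps+1}$ cannot simply be discarded as nonpositive: after the rearrangement they appear with a plus sign on the side that must be bounded above, and one controls them via $\frac{u_\eps v_\eps}{u_\eps+1}\leq v_\eps$ and $\frac{u_\eps^2}{u_\eps+1}\leq u_\eps$ together with the $L^1$ bounds, which is exactly what the paper does. Your treatment of the taxis term is correct and unaffected by the sign issue, so the proof is salvageable with the ingredients you already cite, but as written the central identity is false and the final bookkeeping goes in the wrong direction.
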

\begin{proof}
We start by fixing $T > 0$. We then test the first equation in (\ref{approx_problem}) with $\frac{1}{u_\eps+1}$ to see that
\begin{align*}
	\frac{\d}{\d t}\int_\Omega \ln(u_\eps + 1) &= \int_\Omega \frac{|\grad u_\eps|^2}{(u_\eps + 1)^2} - \chi\int_\Omega \frac{u_\eps}{v_\eps(u_\eps + 1)^2} \grad u_\eps \cdot \grad v_\eps - \int_\Omega \frac{u_\eps v_\eps}{u_\eps + 1} + \rho \int_\Omega \frac{u_\eps}{u_\eps + 1} - \mu \int_\Omega\frac{u^2_\eps}{u_\eps + 1} \\
	&\geq \frac{1}{2}\int_\Omega \frac{|\grad u_\eps|^2}{(u_\eps + 1)^2} - K_1 \int_\Omega |\grad v_\eps|^2 - \int_\Omega v_\eps - \mu \int_\Omega u_\eps \numberthis \label{eq:ln_u_test_1}
\end{align*}
for all $t \in [0,T]$ and $\eps \in (0,1)$ with $K_1 \defs \tfrac{\chi^2}{2}(\inf_{x\in\Omega} v_0(x))^{-2} e^{2T}$. Because \Cref{lemma:l1_l2_props} and \Cref{lemma:v_bounds} then give us a constant $K_2 > 0$ such that
\[
	\int_\Omega u_\eps(\cdot, t) \leq K_2, \;\; \int_\Omega v_\eps(\cdot, t) \leq K_2 \;\;\text{and}\;\; \int_0^T\int_\Omega |\grad v_\eps|^2 \leq K_2
\]
for all $t\in[0,T]$ and $\eps \in (0,1)$,
time integration and some rearranging of inequality (\ref{eq:ln_u_test_1}) results in
\[
	\int_0^T \int_\Omega \frac{|\grad u_\eps|^2}{(u_\eps + 1)^2} \leq 2\left[ \int_\Omega \ln(u_\eps(\cdot, T) + 1) + K_1 K_2 + (1+\mu)K_2T\right]  \leq 2 (K_1 + 1)K_2 + 2(1 + \mu) K_2 T
\]
for all $\eps \in (0,1)$. This completes the proof.
\end{proof}
\section{Construction of limit functions as solution candidates}
This section will now be focused on using the a priori bounds above to construct a sequence $(\eps_j)_{j\in\N}$, along which our approximate solutions converge towards some limit functions $u$, $v$. These will then later play the role of candidates to be a generalized solution in the sense of \Cref{definition:weak_solution}. As it is often the case, the construction of said sequence will be built on well-known compact embedding properties of various function spaces, chief among them those afforded to us by the Aubin--Lions lemma (cf.\ \cite{TemamNavierStokes}). Specifically to enable us to use said lemma, we will now derive the following integrability properties for the time derivatives of the families $(\ln(u_\eps + 1))_{\eps \in (0,1)}$ and $(v_\eps)_{\eps \in (0,1)}$:
\begin{lemma}\label{lemma:dt_bounds}
	For each $T > 0$, there exists a constant $C(T) > 0$ such that
	\[
		\int_0^T \|\partial_t \ln(u_\eps(\cdot, t) + 1)\|_{(W^{2,2}(\Omega))^\star} \d t \leq C(T) \numberthis \label{eq:dt_ln_u_ineq}
	\]
	and 
	\[
		\int_0^T \|{v_\eps}_t(\cdot, t)\|_{(W^{2,2}(\Omega))^\star} \d t \leq C(T)
		\numberthis
		\label{eq:dt_v_ineq}
	\]
	for all $\eps \in (0,1)$.
\end{lemma}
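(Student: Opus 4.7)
The plan is to prove both bounds by a standard duality argument: for any $\psi \in W^{2,2}(\Omega)$ with $\|\psi\|_{W^{2,2}(\Omega)} \leq 1$, I will estimate $|\int_\Omega (\partial_t \ln(u_\eps + 1))\psi|$ and $|\int_\Omega {v_\eps}_t \psi|$ by a sum of quantities that are integrable in $t$ on $[0,T]$ uniformly in $\eps$. The crucial structural fact making everything go through is the two-dimensional Sobolev embedding $W^{2,2}(\Omega) \hookrightarrow W^{1,\infty}(\Omega)$, which supplies a constant $K > 0$ (depending only on $\Omega$) with $\|\psi\|_{L^\infty(\Omega)} + \|\grad\psi\|_{L^\infty(\Omega)} \leq K\|\psi\|_{W^{2,2}(\Omega)}$.

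For (\ref{eq:dt_v_ineq}), I would test the second equation of (\ref{approx_problem}) with $\psi$ and transfer the Laplacian onto $\psi$ to obtain $\int_\Omega {v_\eps}_t\psi = \int_\Omega v_\eps \laplace \psi - \int_\Omega v_\eps \psi + \int_\Omega u_\eps v_\eps \psi$. Estimating the first two terms by Cauchy--Schwarz against $\|\psi\|_{W^{2,2}(\Omega)}$ and the last by $\|\psi\|_{L^\infty(\Omega)}\|u_\eps\|_{L^2(\Omega)}\|v_\eps\|_{L^2(\Omega)}$, then taking the supremum over $\psi$, yields a bound of the form $\|{v_\eps}_t\|_{(W^{2,2}(\Omega))^\star} \leq K'(\|v_\eps\|_{L^2(\Omega)} + \|u_\eps\|_{L^2(\Omega)}\|v_\eps\|_{L^2(\Omega)})$. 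The $L^\infty_t L^2_x$ bound for $v_\eps$ from \Cref{lemma:v_bounds} (with $p = 2$) and the $L^2_t L^2_x$ bound for $u_\eps$ from \Cref{lemma:l1_l2_props}, combined with Hölder in $t$, then produce the claimed $L^1_t$ bound.

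For (\ref{eq:dt_ln_u_ineq}), I would use $\partial_t \ln(u_\eps+1) = {u_\eps}_t/(u_\eps+1)$, test the first equation in (\ref{approx_problem}) with $\psi/(u_\eps+1)$, and integrate the diffusion and taxis parts by parts. Expanding $\grad(\psi/(u_\eps+1)) = \grad\psi/(u_\eps+1) - \psi\grad u_\eps/(u_\eps+1)^2$, the estimates $0 \leq \eta_\eps(u_\eps) \leq 1$, $u_\eps/(u_\eps+1) \leq 1$, $u_\eps^2/(u_\eps+1) \leq u_\eps$, and the positive lower bound $v_\eps \geq e^{-T}\inf_{\Omega} v_0$ from \Cref{lemma:approx_exist} let me control every term by $\|\psi\|_{W^{2,2}(\Omega)}$ times a combination of $\int_\Omega \frac{|\grad u_\eps|^2}{(u_\eps+1)^2}$, $\|\grad v_\eps\|_{L^2(\Omega)}$, $\|u_\eps\|_{L^1(\Omega)}$, $\|v_\eps\|_{L^1(\Omega)}$, and $1$ (using $\int_\Omega |\grad u_\eps|/(u_\eps+1) \leq |\Omega|^{1/2}(\int_\Omega |\grad u_\eps|^2/(u_\eps+1)^2)^{1/2}$ and a Cauchy--Schwarz step for the mixed gradient term). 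Taking the supremum over $\psi$ and integrating in $t$, the $L^1_t$ integrability of each of these quantities on $[0,T]$ is exactly what \Cref{lemma:l1_l2_props}, \Cref{lemma:v_bounds} (for $p=2$), and \Cref{lemma:grad_ln_u_bound} provide.

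The argument is essentially bookkeeping rather than genuinely hard; the only place where care is required is the mixed term involving $\grad v_\eps \cdot \grad u_\eps/(u_\eps+1)^2$ appearing after expanding the taxis contribution, where one has to pull $\|\psi\|_{L^\infty(\Omega)}$ out and split the product so that both factors sit in $L^2(\Omega)$ and can be bounded by the integrability properties of $\grad v_\eps$ and $\grad u_\eps/(u_\eps+1)$. Once this is handled, everything reduces to applying Hölder's inequality in $t$ together with the a priori estimates already in hand.
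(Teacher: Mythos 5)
Your overall strategy is exactly the paper's: test the first equation with $\phi/(u_\eps+1)$ and the second with $\phi$ for $\phi\in W^{2,2}(\Omega)$, expand the gradient of the test function, and reduce everything to the time-integrability of $\int_\Omega\frac{|\grad u_\eps|^2}{(u_\eps+1)^2}$, $\int_\Omega|\grad v_\eps|^2$, $\int_\Omega u_\eps^2$ and the $L^p$ bounds for $v_\eps$, together with the lower bound for $v_\eps$ and $W^{2,2}(\Omega)\hookrightarrow L^\infty(\Omega)$. That part is fine and is precisely how the paper proceeds.

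Two points need repair, though. First, the fact you call crucial, namely $W^{2,2}(\Omega)\hookrightarrow W^{1,\infty}(\Omega)$ in two dimensions, is false: for $\psi\in W^{2,2}(\Omega)$ one only has $\grad\psi\in W^{1,2}(\Omega)$, which in the borderline case $n=2$ embeds into every $L^p(\Omega)$ with $p<\infty$ but not into $L^\infty(\Omega)$. Fortunately you never actually need $\|\grad\psi\|_{L^\infty(\Omega)}$: every term containing $\grad\psi$ (the term $\int_\Omega\frac{\grad u_\eps\cdot\grad\psi}{u_\eps+1}$ from the diffusion part and $\int_\Omega\frac{\eta_\eps(u_\eps)u_\eps}{v_\eps(u_\eps+1)}\grad v_\eps\cdot\grad\psi$ from the taxis part) is a product of an $L^2(\Omega)$ function with $\grad\psi$, so Cauchy--Schwarz with $\|\grad\psi\|_{L^2(\Omega)}\leq\|\psi\|_{W^{2,2}(\Omega)}$ suffices; this is what the paper does, keeping only $\|\psi\|_{L^\infty(\Omega)}+\|\psi\|_{W^{2,2}(\Omega)}$ as the test-function factor. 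Second, in the $v$-equation you move the Laplacian entirely onto $\psi$; the second integration by parts produces the boundary term $-\int_{\partial\Omega}v_\eps\,\grad\psi\cdot\nu$, which does not vanish for general $\psi\in W^{2,2}(\Omega)$. Stop after one integration by parts and estimate $\bigl|\int_\Omega\grad v_\eps\cdot\grad\psi\bigr|\leq\|\grad v_\eps\|_{L^2(\Omega)}\|\grad\psi\|_{L^2(\Omega)}$ instead, using that $\|\grad v_\eps\|_{L^2(\Omega)}^2$ is integrable on $(0,T)$ by \Cref{lemma:v_bounds}. With these two adjustments your argument coincides with the paper's proof.
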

\begin{proof}
	To prove (\ref{eq:dt_ln_u_ineq}), we first fix $\phi \in W^{2,2}(\Omega)$ and then test the first equation in (\ref{approx_problem}) with $\frac{\phi}{u_\eps + 1}$ to gain that
	\begin{align*}
		\int_\Omega \partial_t \ln(u_\eps + 1) \phi =& \int_\Omega \frac{|\grad u_\eps|^2}{(u_\eps + 1)^2}\phi - \int_\Omega \frac{\grad u_\eps \cdot \grad \phi}{u_\eps + 1} - \chi\int_\Omega \frac{\eta_\eps(u_\eps) u_\eps }{v_\eps(u_\eps + 1)^2} (\grad v_\eps \cdot \grad u_\eps) \phi
		 \\
		&+\chi\int_\Omega \frac{\eta_\eps(u_\eps) u_\eps }{v_\eps(u_\eps + 1)} (\grad v_\eps \cdot \grad \phi)- \int_\Omega \frac{u_\eps v_\eps}{u_\eps + 1}\phi + \rho\int_\Omega \frac{u_\eps}{u_\eps + 1}\phi - \mu \int_\Omega \frac{u_\eps^2}{u_\eps + 1} \phi \numberthis \label{eq:u_phi_test} 
	\end{align*}
	for all $t \in [0,T]$ and $\eps \in (0,1)$. Due to Young's inequality, the Cauchy--Schwarz inequality and the fact that \Cref{lemma:approx_exist} gives us that
	\[
		\inf_{x\in\Omega} v_\eps(x, t) \geq e^{-t}\inf_{x\in\Omega} v_0(x) \geq e^{-T}\inf_{x\in\Omega} v_0(x) > 0
	\] for all $t \in [0,T]$ and $\eps \in (0,1)$, the above equality directly implies that there exists a constant $K_1(T) > 0$ such that
	\begin{align*}
		\left| \int_\Omega \partial_t \ln(u_\eps(\cdot, t) + 1) \phi \right| \leq K_1(T)\left(  \int_\Omega\frac{|\grad u_\eps|^2}{(u_\eps + 1)^2} + \int_\Omega |\grad v_\eps|^2 + \int_\Omega u_\eps + \int_\Omega v_\eps +  1 \right) \left\{ \|\phi\|_\L{\infty} +  \|\phi\|_{W^{2,2}(\Omega)} \right\}
	\end{align*}
	for all $t \in [0,T]$ and $\eps \in (0,1)$. Given now the boundedness and integrability properties in \Cref{lemma:l1_l2_props}, \Cref{lemma:v_bounds} and \Cref{lemma:grad_ln_u_bound} and the fact that $W^{2,2}(\Omega)$ embeds continuously into $L^\infty(\Omega)$, this directly implies the inequality (\ref{eq:dt_ln_u_ineq}).\\[0.5em]
	To now prove (\ref{eq:dt_v_ineq}), we again fix $\phi \in W^{2,2}(\Omega)$ and this time test the second equation in (\ref{approx_problem}) with $\phi$ to gain that
	\[
		\int_\Omega {v_\eps}_t \phi = - \int_\Omega \grad v_\eps \cdot \grad \phi - \int_\Omega v_\eps \phi + \int_\Omega u_\eps v_\eps \phi \numberthis \label{eq:v_phi_test} 
	\]
	for all $t \in [0,T]$ and $\eps \in (0,1)$. By similar reasoning as above, we can now find a constant $K_2(T) > 0$ such that
	\[
		\left|	\int_\Omega {v_\eps}_t \phi \right| \leq K_2(T) \left(
		\int_\Omega |\grad v_\eps|^2 + \int_\Omega u_\eps^2 +
		\int_\Omega v^2_\eps + 1
		\right)\left\{ \|\phi\|_\L{\infty} +  \|\phi\|_{W^{2,2}(\Omega)} \right\}
	\]
	for all $t \in [0,T]$ and $\eps \in (0,1)$ based on (\ref{eq:v_phi_test}). Again due to \Cref{lemma:l1_l2_props} and \Cref{lemma:v_bounds}, this implies (\ref{eq:dt_v_ineq}) and therefore completes the proof.
\end{proof}
\noindent With all of the preparations now firmly in place, we can use the Aubin--Lions lemma and Vitali's theorem to construct our solution candidates as the limits of our approximate solutions along a suitable sequence of $\eps \in (0,1)$. Apart from the extended convergence result presented in the sequel, we will also already derive most of the convergence properties needed to translate the necessary properties for a generalized solution from the approximate solutions to our solution candidates.
\begin{lemma}
	\label{lemma:subsequence_extraction}
	There exist a sequence $(\varepsilon_j)_{j\in\N} \subseteq (0,1)$ with $\varepsilon_j \searrow 0$ as $j\rightarrow\infty$ and a tuple $(u,v)$  of limit functions defined on $\Omega\times[0,\infty)$ such that
	\begin{equation}
	\left\{
	\begin{aligned}
	&u_\varepsilon \rightarrow u && \text{in } L^p_\loc(\overline{\Omega}\times[0,\infty)) \text{ for } p\in[1,2) \text{ and a.e.\ in } \Omega \times [0,\infty), \\
	&u_\varepsilon(\cdot, t) \rightarrow u(\cdot, t) && \text{in } L^p(\Omega) \text{ for } p \in [1,2) \text{ and a.e.\ } t > 0,  \\
	&u_\eps \rightharpoonup u \;\;\;\;\;\;\;\;\;\;\;\;\;\; && \text{in } L^2_\loc(\overline{\Omega}\times[0,\infty)), \\
	&\ln(u_\varepsilon + 1) \rightharpoonup \ln(u + 1) \;\;\;\;\;\;\;\;\;\;\;\;\;\; && \text{in } L^2_\loc([0,\infty);W^{1,2}(\Omega)), \\
	&v_\eps \rightarrow v && \text{in } L^p_\loc(\overline{\Omega}\times[0,\infty)) \text{ for all } p \geq 1 \text{ and a.e.\ in } \Omega \times [0,\infty), \\
	&v_\eps(\cdot, t) \rightarrow v(\cdot, t) && \text{in } L^p(\Omega) \text{ for } p \geq 1 \text{ and for a.e.\ } t > 0 \text{ and } \\
	&v_\eps  \rightharpoonup v && \text{in } L^2_\loc([0,\infty);W^{1,2}(\Omega)) 
	\end{aligned}
	\right. 
	\label{eq:basic_convergence_props}
	\end{equation}
	as $\eps = \eps_j \searrow 0$. Further, $u$ is nonnegative, $v$ has the property $v(x, t) \geq e^{-t}\inf_{y\in\Omega} v_0(y)$ for almost all $(x,t)\in\Omega\times[0,\infty)$  and both satisfy the regularity properties in (\ref{wsol:regularity}).
\end{lemma}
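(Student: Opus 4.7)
The plan is to combine the a priori bounds from Section \ref{section:apriori} with the Aubin--Lions lemma and Vitali's convergence theorem, extracting successive subsequences and then using a diagonal argument to obtain a single terminal sequence $(\eps_j)_{j\in\N}$ along which all of the asserted convergences hold simultaneously. The limit functions $u$ and $v$ will be defined as the a.e.\ (pointwise) limits of the corresponding approximate solutions; their nonnegativity and the stated lower bound for $v$ then follow directly from the pointwise properties (\ref{eq:approx_lower_bounds}).

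First I would apply Aubin--Lions to the family $(\ln(u_\eps+1))_{\eps \in (0,1)}$. The $L^1$ bound for $u_\eps$ from \Cref{lemma:l1_l2_props} yields an $L^p$ bound for $\ln(u_\eps + 1)$ for every finite $p$; combined with \Cref{lemma:grad_ln_u_bound} (noting $|\grad \ln(u_\eps+1)|^2 = |\grad u_\eps|^2/(u_\eps+1)^2$) this gives boundedness in $L^2_\loc([0,\infty);W^{1,2}(\Omega))$. Together with the time derivative bound (\ref{eq:dt_ln_u_ineq}), Aubin--Lions provides strong $L^2_\loc(\overline{\Omega}\times[0,\infty))$ convergence of $\ln(u_\eps+1)$ along some subsequence, and a further extraction produces pointwise a.e.\ convergence. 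Since $s\mapsto e^s - 1$ is continuous, this yields a nonnegative a.e.\ limit $u$ of $(u_\eps)$. An entirely analogous Aubin--Lions application to $(v_\eps)$, based on the $L^2_\loc([0,\infty);W^{1,2}(\Omega))$ bound from \Cref{lemma:v_bounds} and the time derivative bound (\ref{eq:dt_v_ineq}), gives an a.e.\ limit $v$ after another subsequence extraction; the pointwise lower bound for $v_\eps$ in \Cref{lemma:approx_exist} carries over to $v$ by a.e.\ convergence.

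Next I would upgrade these strong $L^2_\loc$ convergences to the required $L^p_\loc$ statements by Vitali's theorem: the $L^2$ bound (\ref{eq:u_l2_weak_boundedness}) supplies equi-integrability of $(|u_\eps|^p)$ on bounded time--space cylinders for any $p \in [1,2)$, yielding convergence in $L^p_\loc(\overline{\Omega}\times[0,\infty))$; for $v_\eps$, the $L^p$ bounds from \Cref{lemma:v_bounds} are uniform for every finite $p$, so Vitali gives $L^p_\loc$ convergence for all $p \geq 1$. The weak convergences $u_\eps \rightharpoonup u$ in $L^2_\loc(\overline{\Omega}\times[0,\infty))$, $\ln(u_\eps+1) \rightharpoonup \ln(u+1)$ in $L^2_\loc([0,\infty);W^{1,2}(\Omega))$ and $v_\eps \rightharpoonup v$ in $L^2_\loc([0,\infty);W^{1,2}(\Omega))$ then follow from reflexive weak precompactness combined with uniqueness of the limit, identified via the already established a.e.\ convergence. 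The pointwise-in-time statements $u_\eps(\cdot,t)\to u(\cdot,t)$ and $v_\eps(\cdot,t)\to v(\cdot,t)$ in the respective $L^p(\Omega)$-norms for a.e.\ $t>0$ are obtained by a Fubini-type argument: strong $L^p_\loc$ convergence in $\overline{\Omega}\times[0,\infty)$ means $\int_0^T \|u_\eps(\cdot,t)-u(\cdot,t)\|_{L^p(\Omega)}^p \d t \to 0$, so a further diagonal extraction yields the claim on a set of full measure in $(0,\infty)$.

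Finally, the regularity properties in (\ref{wsol:regularity}) are inherited by the limits: the $L^\infty([0,\infty); L^1(\Omega))$ bound for $u$ follows from (\ref{eq:l1_bounded}) via Fatou applied to the a.e.\ convergence $u_{\eps_j}(\cdot,t)\to u(\cdot,t)$; the $L^2_\loc$ regularity for $u$ and the $L^\infty_\loc([0,\infty);L^p(\Omega))$ regularity for $v$ (for every $p\geq 1$) follow by Fatou from the bounds in \Cref{lemma:l1_l2_props} and \Cref{lemma:v_bounds}; the weak $W^{1,2}$-type regularities are immediate from the weak convergence statements; and $v^{-1} \in L^\infty_\loc(\overline{\Omega}\times[0,\infty))$ is a direct consequence of the pointwise lower bound. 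The main obstacle is not any individual step but the bookkeeping of the successive subsequence extractions, ensuring that a single terminal sequence $(\eps_j)_{j\in\N}$ realizes all of the listed convergences at once; this is handled by a standard diagonal procedure over a sequence $T_k\nearrow\infty$ and $p_k\nearrow\infty$.
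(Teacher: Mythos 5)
Your proposal is correct and follows essentially the same route as the paper: Aubin--Lions applied to $(\ln(u_\eps+1))_{\eps}$ and $(v_\eps)_{\eps}$ for strong $L^2_\loc$ and a.e.\ convergence, Vitali (with the uniform $L^2$ bound for $u_\eps$ and the uniform $L^p$ bounds for $v_\eps$) to upgrade to the $L^p_\loc$ statements, weak precompactness plus limit identification for the weak convergences, and successive (diagonal) extractions for the pointwise-in-time claims. The only cosmetic difference is that you obtain the $L^\infty_\loc([0,\infty);L^p(\Omega))$ regularity of the limits via Fatou rather than via convergence of the norms for a.e.\ $t$, which is equally valid.
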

\begin{proof}
	As we will successively extract subsequences multiple times in this lemma, we always denote the latest considered sequence as $(\eps_j)_{j\in\N}$ for ease of notation and without loss of generality.
	\\[0.5em]
	Due to \Cref{lemma:l1_l2_props}, \Cref{lemma:v_bounds}, \Cref{lemma:grad_ln_u_bound}, and \Cref{lemma:dt_bounds} combined with the Aubin--Lions lemma (cf.\  \cite{TemamNavierStokes}), we immediately gain that the families $(\ln(u_\eps + 1))_{\eps \in (0,1)}$ and $(v_\eps)_{\eps \in (0,1)}$ are compact in $L^2_\loc([0,\infty);L^2(\Omega))$ and therefore in $L^2_\loc(\overline{\Omega}\times[0,\infty))$ with regard to the strong topology and in $L^2_\loc([0,\infty);W^{1,2}(\Omega))$ with regard to the weak topology. Due to the strong compactness above, successive extraction of subsequences then gives us a sequence $(\eps_j)_{j\in\N}$ converging to zero and limit functions $u, v$ with 
	\[
		\ln(u_\eps + 1) \rightarrow \ln(u + 1) \stext{and}	v_\eps \rightarrow v \stext{in} L^2_\loc(\overline{\Omega}\times[0,\infty)) \stext{as} \eps = \eps_j \searrow 0.
	\]
	Again by successive subsequence extraction, we gain that for the new subsequence $(\eps_j)_{j\in\N}$ the convergences above are additionally true in an almost everywhere pointwise sense. This directly implies that $u_\eps \rightarrow u$ almost everywhere pointwise as $\R \ni x \mapsto e^{x} - 1$ is continuous. Note here that it is these pointwise convergences that make sure that all the limit functions found in this lemma are identical. Using that the bounds in \Cref{lemma:l1_l2_props} imply that $(u_\eps)_{\eps \in (0,1)}$ is compact in $L^2_\loc(\overline{\Omega}\times[0,\infty))$ with regards to the weak topology and further using the other weak compactness properties mentioned above, we also directly gain the weak convergence properties posited in (\ref{eq:basic_convergence_props}) by more subsequence extraction arguments.
	\\[0.5em]
	To prove the remaining convergence properties, we will now heavily lean on the Vitali convergence theorem and the de La Vallée Poussin criterion for uniform integrability (cf.\ \cite[pp.\ 23-24]{PropabilitiesAndPotential}). To this end, let us first note that the almost everywhere convergence of $u_{\eps_j}$ to $u$ and $v_{\eps_j}$ to $v$ implies that, for almost every $t > 0$, $u_{\eps_j}(\cdot, t) \rightarrow u(\cdot, t)$ and $v_{\eps_j}(\cdot, t) \rightarrow v(\cdot, t)$ pointwise almost everywhere. We further know from previous observations in \Cref{lemma:l1_l2_props} and \Cref{lemma:v_bounds} that, for each $T > 0$, $p\in[1,\infty)$, there exists constants $K_1(T), K_2(T, p) > 0$ with
	\[
		\int_0^T \int_\Omega u_\eps^2 \leq K_1(T), \;\;\;\; \int_\Omega v_\eps(\cdot, t)^p \leq K_2(T, p) \stext{ and therefore } \int_0^T\int_\Omega v_\eps^p \leq T K_2(T, p)
	\] 
	for all $t\in[0,T]$ and $\eps \in (0,1)$, which by Vitali's theorem result in
	\begin{align*}
		&u_{\eps_j} \rightarrow  u && \text{ in } L^p_\loc(\overline{\Omega}\times[0,\infty)) \text{ for } p \in [1,2), \\
		&v_{\eps_j}(\cdot, t) \rightarrow  v(\cdot, t) &&\text{ in } L^p(\Omega) \text{ for all } p \geq 1 \text{ and a.e.\ } t > 0 \text{ as well as } \\
		&v_{\eps_j} \rightarrow  v && \text{ in } L^p_\loc(\overline{\Omega}\times[0,\infty)) \text{ for all } p \geq 1
	\end{align*} 
	as $j \rightarrow \infty$.
	\\[0.5em]
	Because of the $L^p_\loc(\overline{\Omega}\times[0,\infty))$ convergence of the sequence $(u_{\eps_j})_{j\in\N}$ for $p \in [1,2)$, one last set of successive subsequences extractions gives us our final desired convergence property in (\ref{eq:basic_convergence_props}), namely that the sequences $(u_{\eps_j}(\cdot, t))_{j\in\N}$ converge to $u(\cdot, t)$ in $L^p(\Omega)$ for a.e.\  $t > 0$ and $p \in [1,2)$.
	\\[0.5em]
	The almost everywhere nonnegativity of $u$ and lower bound for $v$, which ensures the regularity property $v^{-1} \in L_\loc^\infty(\overline{\Omega}\times[0,\infty))$ from  (\ref{wsol:regularity}), are inherited from the approximate solutions due to the almost everywhere pointwise convergence proven above. While most of the other regularity properties in (\ref{wsol:regularity}) are then already directly ensured by the convergence properties considered in  (\ref{eq:basic_convergence_props}), the remaining $L_\loc^\infty([0,\infty);L^p(\Omega))$ type regularity properties follow because of the convergence of the norms $\|u_{\eps_j}(\cdot, t)\|_\L{1}$ and $\|v_{\eps_j}(\cdot, t)\|_\L{p}$ towards $\|u(\cdot, t)\|_\L{1}$ and $\|v(\cdot, t)\|_\L{p}$ for almost every $t > 0$ and  $p\in[1,\infty)$ ensured by (\ref{eq:basic_convergence_props}) combined with already established boundedness properties in \Cref{lemma:l1_l2_props} and \Cref{lemma:v_bounds} for the approximate solutions.
\end{proof}
\section{An additional convergence property for $(\grad v_{\eps_j})_{j\in\N}$}
\label{section:grad_v_convergence}
While we already established a lot of convergence properties in the lemma above, we will still need one more critical strong convergence property for the sequence $(\grad v_{\eps_j})_{j\in\N}$ to handle the taxis-induced terms in (\ref{wsol:ln_u_inequality}). To derive said property, we follow an approach that can be found, for instance, in \cite[Lemma 4.4]{MR3859449} or \cite[Lemma 8.2]{MR3383312}, as both of these papers deal with very similar solution concepts and therefore also have very similar needs in terms of convergence properties. 
\\[0.5em]
The first step towards the convergence property proven in \Cref{lemma:grad_v_convergence} later in this section is to argue that $v$ in fact already satisfies (\ref{wsol:v_inequality}). We do this by using the convergence properties in \Cref{lemma:subsequence_extraction} to show that (\ref{wsol:v_inequality}) directly translates from the approximate solutions to $v$ as follows:
\begin{lemma}\label{lemma:v_is_weak}
	Let $v$ be as in \Cref{lemma:subsequence_extraction}. Then $v$ satisfies (\ref{wsol:v_inequality}) for the same functions $\phi$ as in \Cref{definition:weak_solution}. 
\end{lemma}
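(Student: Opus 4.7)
The plan is to start from the classical weak formulation satisfied by the $v_\eps$-equation and then pass each term to the limit as $\eps = \eps_j \searrow 0$, relying on the convergence properties assembled in \Cref{lemma:subsequence_extraction} together with the a priori bounds of \Cref{lemma:l1_l2_props} and \Cref{lemma:v_bounds}. Concretely, for any admissible test function $\phi$ as in \Cref{definition:weak_solution}, I would multiply the second equation of (\ref{approx_problem}) by $\phi$, integrate over $\Omega \times (0, \infty)$, integrate by parts once in time (the contribution at $t = \infty$ vanishes because $\phi$ has compact time support, while the contribution at $t = 0$ produces $\int_\Omega v_0 \phi(\cdot, 0)$ via $v_\eps(\cdot, 0) = v_0$), and integrate by parts once in space (using the Neumann condition $\grad v_\eps \cdot \nu = 0$). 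The outcome is
\begin{equation*}
\int_0^\infty \int_\Omega v_\eps \phi_t + \int_\Omega v_0 \phi(\cdot, 0) = \int_0^\infty \int_\Omega \grad v_\eps \cdot \grad \phi + \int_0^\infty \int_\Omega v_\eps \phi - \int_0^\infty \int_\Omega u_\eps v_\eps \phi,
\end{equation*}
which is exactly (\ref{wsol:v_inequality}) with $(u_\eps, v_\eps)$ in place of $(u, v)$. What remains is to take $\eps = \eps_j \searrow 0$ in each of the integrals.

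Fix $T > 0$ large enough that $\supp \phi \subseteq \overline{\Omega} \times [0, T]$. The $v_0$-term is $\eps$-independent, and the terms linear in $v_\eps$ are routine: the strong convergence $v_\eps \to v$ in $L^2(\Omega \times (0, T))$ from \Cref{lemma:subsequence_extraction} paired against $\phi_t, \phi \in L^2(\Omega \times (0, T))$ gives $\int v_\eps \phi_t \to \int v \phi_t$ and $\int v_\eps \phi \to \int v \phi$; and the gradient term $\int \grad v_\eps \cdot \grad \phi \to \int \grad v \cdot \grad \phi$ is a standard weak-strong pairing granted by the weak $L^2_\loc([0, \infty); W^{1,2}(\Omega))$ convergence $v_\eps \rightharpoonup v$ against $\grad \phi \in L^2(\Omega \times (0, T))$.

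The only genuinely delicate term, which I expect to be the main obstacle, is the nonlinear $\int u_\eps v_\eps \phi$. My plan here is to split $u_\eps v_\eps - uv = (u_\eps - u) v_\eps + u(v_\eps - v)$ and combine Hölder's inequality with the strong $L^p_\loc$ convergence of $u_\eps$ for $p \in [1, 2)$ and of $v_\eps$ for every finite $p$. Specifically, choosing conjugate exponents such as $\tfrac{3}{2}$ and $3$, and invoking the uniform $L^q$ bounds on $v_\eps$ from \Cref{lemma:v_bounds} together with the control $u \in L^{3/2}(\Omega \times (0, T))$ inherited from (\ref{eq:u_l2_weak_boundedness}), one obtains $u_\eps v_\eps \to uv$ in $L^r(\Omega \times (0, T))$ for some $r > 1$. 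Since $\phi \in L^\infty((0, T); L^{r'}(\Omega)) \subseteq L^{r'}(\Omega \times (0, T))$ for the conjugate exponent $r'$ (here exploiting $T < \infty$), a final Hölder pairing yields $\int u_\eps v_\eps \phi \to \int u v \phi$, completing the term-by-term passage to the limit and establishing (\ref{wsol:v_inequality}) for $v$.
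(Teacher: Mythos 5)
Your argument is correct and follows essentially the same route as the paper: derive the approximate identity for $v_\eps$ by testing and integrating by parts, pass the linear and gradient terms to the limit via the strong and weak convergences of \Cref{lemma:subsequence_extraction}, and treat $\int u_\eps v_\eps\phi$ by the same splitting $(u_\eps-u)v_\eps+u(v_\eps-v)$ with H\"older and the uniform $L^q$ bounds on $v_\eps$ (the paper uses the exponent triple $5,\tfrac{5}{3},5$). Only make sure your illustrative exponents leave genuine slack so that the product converges in some $L^r$ with $r>1$ --- the pair $\tfrac32$ and $3$ lands exactly at $r=1$, which would require $\phi\in L^\infty$, not guaranteed by the test class; taking, e.g., $\tfrac53$ and $5$ as in the paper resolves this.
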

\begin{proof}
	We first fix a test function $\phi \in \medcap_{p\geq 1} L^\infty_\loc((0,\infty); \L{p}) \cap L^2((0,\infty);W^{1,2}(\Omega))$ with $\phi_t \in L^2(\Omega\times[0,\infty))$. It is then easily checked by partial integration that each $v_\eps$ satisfies (\ref{wsol:v_inequality}) with said $\phi$ and as such we need now only further check that the equality survives the limit process $\eps=\eps_j \searrow 0$. For most of the terms in (\ref{wsol:v_inequality}), this is immediately obvious from the convergence properties seen in \Cref{lemma:subsequence_extraction} and therefore we will only give the argument for the $\int_\Omega u_\eps v_\eps \phi$ term as the $u_\eps v_\eps$ growth term is generally the primary source of complications in the second equation of (\ref{approx_problem}).
	\\[0.5em]
	For this, let now $T > 0$ be such that $\supp(\phi) \subseteq \overline{\Omega}\times[0,T]$ and then observe that
	\begin{align*}
		&\left| \int_0^\infty\int_\Omega u_\eps v_\eps \phi - \int_0^\infty\int_\Omega u v \phi \right| \\
		\leq& \int_0^T\int_\Omega |v_\eps||u_\eps - u||\phi| + \int_0^T\int_\Omega|v_\eps - v||u||\phi| \\
		\leq& \|v_\eps\|_{L^5(\Omega\times(0,T))} \|u_\eps - u\|_{L^\frac{5}{3}(\Omega\times(0,T))} \|\phi\|_{L^5(\Omega\times(0,T))} + \|v_\eps - v\|_{L^5(\Omega\times(0,T))} \|u\|_{L^\frac{5}{3}(\Omega\times(0,T))} \|\phi\|_{L^5(\Omega\times(0,T))}
	\end{align*}
	for all $\eps \in (0,1)$.
	Due to the fact that $L^\infty((0, T);\L{5}) \hookrightarrow L^5(\Omega\times(0,T))$ and the convergence properties laid out in \Cref{lemma:subsequence_extraction}, this then implies 
	\[
		\int_\Omega u_\eps v_\eps \phi \rightarrow \int_\Omega u v \phi 
	\]
	as $\eps = \eps_j \searrow 0$. This completes the proof.
\end{proof}
\noindent As the convergence properties in \Cref{lemma:subsequence_extraction} for the sequence $(v_{\eps_j})_{j\in \N}$ already provide us with the estimate \[
	\int_0^T\int_\Omega |\grad v|^2 \leq \liminf_{\eps=\eps_j \searrow 0} \int_0^T\int_\Omega |\grad v_\eps|^2,
\]
we now derive an important inequality from (\ref{wsol:v_inequality}) that will help us gain the corresponding estimate from below. To do this, the natural approach would be setting $\phi = v$ in (\ref{wsol:v_inequality}), which is not possible due to insufficient time regularity of $v$. Therefore, we have to approximate $v$ with time averaged versions of itself and then use those as test functions $\phi$. While using this approximation does not allow us to recover (\ref{wsol:v_inequality}) with $\phi = v$ exactly, it still provides us with an inequality version that is sufficient for our purposes.
\\[0.5em]
As this approach is very similar to the one used in \cite[Lemma 4.4]{MR3859449} or \cite[Lemma 8.2]{MR3383312} for a corresponding inequality, we will only give the following argument in brief:
\begin{lemma} \label{lemma:lower_v_bound}
	Let $v$ be as in \Cref{lemma:subsequence_extraction}.
    There exists a null set $N \subseteq (0,\infty)$ such that
	\begin{equation}
		\frac{1}{2}\int_\Omega v^2(\cdot, T) - \frac{1}{2}\int_\Omega v_0^2 + \int_0^T \int_\Omega |\grad v^2| \geq  
	     \int_0^T \int_\Omega uv^2 - \int_0^T \int_\Omega v^2 \label{eq:v_ineqality}
	\end{equation}
	for all $T \in (0,\infty)\setminus N$. 
\end{lemma}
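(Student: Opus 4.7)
The plan is to test (\ref{wsol:v_inequality}), which $v$ satisfies by \Cref{lemma:v_is_weak}, with a function that approximates the formally desired but inadmissible choice $\phi(x, t) = v(x, t)\chi_{[0, T]}(t)$. To remedy the insufficient time regularity of $v$, I would replace it by its backward Steklov average
\[
    v_h(x, t) \defs \tfrac{1}{h} \int_{t-h}^{t} \tilde v(x, s) \d s,
\]
where $\tilde v$ extends $v$ by $v_0$ on $[-h, 0]$; this choice ensures $v_h(\cdot, 0) = v_0$, $(v_h)_t \in L^2_\loc$, and the full spatial integrability inherited from $v$. I would further truncate in time using a Lipschitz cutoff $\zeta_{T, \delta}: [0, \infty) \to [0, 1]$ that equals $1$ on $[0, T]$, decreases linearly on $[T, T+\delta]$, and vanishes on $[T+\delta, \infty)$. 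The product $\phi_{h, \delta} \defs \zeta_{T, \delta} v_h$ is then an admissible test function in (\ref{wsol:v_inequality}), into which I plug it and pass to the limits $h \searrow 0$ first and $\delta \searrow 0$ second.

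The crucial observation concerns the term $\int_0^\infty \int_\Omega v \zeta_{T, \delta} (v_h)_t$: the pointwise identity
\[
    v(t) \cdot \tfrac{v(t) - v(t-h)}{h} = \tfrac{1}{2h} \bigl(v(t)^2 - v(t-h)^2\bigr) + \tfrac{1}{2h} \bigl(v(t) - v(t-h)\bigr)^2
\]
has a non-negative last summand. Substituting into (\ref{wsol:v_inequality}) and discarding this non-negative remainder turns the identity into a one-sided estimate precisely in the $\geq$ direction asserted in (\ref{eq:v_ineqality}). All other terms are routine: standard results for Steklov averages yield $v_h \to v$ in $L^2_\loc(\overline{\Omega}\times[0, \infty))$ and $\grad v_h \to \grad v$ in $L^2_\loc([0, \infty); L^2(\Omega))$, and the integrability of $uv$ supplied by \Cref{lemma:l1_l2_props} and \Cref{lemma:v_bounds} controls the $\int uv \zeta_{T, \delta} v_h$ piece.

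Passing $h \searrow 0$, a change of variables in the remaining backward difference quotient of $\tfrac{1}{2}v^2$ produces $-\tfrac{1}{2}\int_0^\infty \int_\Omega v^2 \zeta_{T, \delta}'$ plus a boundary term $-\tfrac{1}{2}\int_\Omega v_0^2$; this latter piece combines with the initial-value contribution $\int_\Omega v_0 \zeta_{T, \delta}(0) v_h(\cdot, 0) = \int_\Omega v_0^2$, leaving a net $+\tfrac{1}{2}\int_\Omega v_0^2$ on the correct side. Subsequently sending $\delta \searrow 0$, Lebesgue's differentiation theorem applied to the locally integrable map $t \mapsto \int_\Omega v^2(\cdot, t)$ turns $-\tfrac{1}{2}\int_0^\infty \int_\Omega v^2 \zeta_{T, \delta}'$ into $\tfrac{1}{2}\int_\Omega v^2(\cdot, T)$ for every $T$ outside a null set $N \subseteq (0, \infty)$, while dominated convergence handles the $\zeta_{T, \delta}$-weighted space-time integrals of $|\grad v|^2$, $v^2$ and $uv^2$. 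Rearranging then gives (\ref{eq:v_ineqality}).

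The principal obstacle is the careful bookkeeping across the two successive limits: ensuring that the discarded non-negative remainder is discarded on the correct side of the identity so that the asserted $\geq$ direction is produced, tracking the $t = 0$ boundary contributions so that they match up into the $-\tfrac{1}{2}\int_\Omega v_0^2$ term, and arguing that the null set $N$ can be chosen independently of the approximation parameters. The overall strategy is an Alt--Luckhaus-type argument and closely parallels \cite[Lemma 8.2]{MR3383312} and \cite[Lemma 4.4]{MR3859449}, to which the excerpt explicitly defers.
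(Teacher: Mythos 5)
Your proposal is correct and follows essentially the same route as the paper: both test (\ref{wsol:v_inequality}) with a temporal cutoff times a backward Steklov average of $v$ extended by the initial data, obtain the one-sided estimate from the sign of the quadratic remainder in the time-difference term, and characterize the null set $N$ via Lebesgue points of $t \mapsto \int_\Omega v^2(\cdot,t)$, deferring details to the same references. The only cosmetic difference is that the paper additionally replaces $v_0$ by smooth approximations $v_{0k}$ and takes a third limit $k\to\infty$, which is inessential given the assumed regularity of $v_0$.
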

\begin{proof}
    As in the references, we start by first fixing a null set $N \subseteq (0, \infty)$ such that each $T\in(0,\infty)\setminus N$ is a Lebesgue point of the map
    \[
	    [0,\infty) \rightarrow [0,\infty), \;\;\;\; t \mapsto \int_\Omega v^2(x,t)\d x
    \] and we then fix one such $T$. While we ourselves will not reiterate this argument from the references, this property of $T$ is mainly used there to ensure that 
    \[
	    \frac{1}{\delta}\int_{T}^{T+\delta} \int_\Omega v^2(x,t) \d x \d t \rightarrow \int_\Omega v^2(x,T) \d x
	\] for $\delta \rightarrow 0$.
    \\[0.5em]
    Because $v$ does not have all the necessary regularity properties to be used as a test function in (\ref{wsol:v_inequality}), which is what we want to essential do, due to us not knowing much about its time derivative, we then construct a time averaged version of $v$ with regularized initial data to take its place as follows:
    \\[0.5em]
	Let first $(v_{0k})_{k\in\N} \subseteq C^1(\overline{\Omega})$ be such that $v_{0k} \rightarrow v_0$ in $L^2(\Omega)$ as $k \rightarrow \infty$ due to density. Let then $\zeta_\delta$ be a cut-off function on $[0,\infty)$ such that $\zeta_\delta \equiv 1$ on $[0,T]$ and $\zeta_\delta \equiv 0$ on $[T+\delta, \infty)$ for $\delta \in (0,1)$ constructed in the same way as in the references. Further let
	\[
		\tilde{v}_k(x,t) \defs \begin{cases}
		v(x,t), \;\;\;\;\;\;& (x,t) \in \Omega \times (0,\infty), \\
		v_{0k}(x), &(x,t) \in \Omega \times (-1, 0]
		\end{cases}
	\]
	and then let $\phi(x,t) \defs \phi_{h,\delta, k}(x,t) \defs \zeta_\delta(t) (A_h\tilde{v}_k)(x,t)$ for all $(x,t) \in \Omega\times(0,\infty)$ with
	\[
		(A_h\tilde{v}_k)(x,t) \defs \frac{1}{h}\int_{t-h}^t \tilde{v}_k(x,s) \d s
	\]
	for all $(x,t) \in \Omega \times (0,\infty)$ and $\delta,h \in (0,1), k\in \N$. Similar to the references, it is then easy to show that the regularity properties of $v$ are enough to ensure that $\phi$ is a valid test function for (\ref{wsol:v_inequality}). We are therefore allowed to apply it to said equality with the aim to gain (\ref{eq:v_ineqality}) after a number of limit processes for the parameters $\delta, h$ and $k$. Most of the resulting integrals remain the same as in the references and converge or can be estimated in a similar fashion due to the regularity of $v$ and the fact that it implies that
	\[
		(A_h \tilde{v}_k)  \rightharpoonup \tilde{v}_k \stext{ in all }L^p(\Omega\times(0,T)) \text{ for } p \in (1,\infty) \text{ as } h \searrow 0
	\]
	and
	\[
		\grad (A_h \tilde{v}_k) = (A_h \grad\tilde{v}_k)  \rightharpoonup \grad \tilde{v}_k \stext{ in }L^2(\Omega\times(0,T)) \text{ as } h \searrow 0
	\]	
	because of \cite[Lemma A.2]{MR3383312} for all $k\in\N$. It is the estimates for these integrals, which we will not discuss here in more detail, that lead to us only deriving (\ref{eq:v_ineqality}) as an inequality as opposed to the equality one would expect for $\phi = v$. For the full details concerning this, see e.g.\ \cite[Lemma 4.4]{MR3859449} or \cite[Lemma 8.2]{MR3383312}.
	\\[0.5em]
	We therefore will only take a closer look at the two integrals new to our setting: As we know due to the already established regularity properties of $u$ and $v$ that $uv \in L_\loc^{p}(\overline{\Omega}\times[0,\infty))$ for all $p \in [1,2)$ due to the Hölder inequality, we immediately see that
	\[
		\int_0^\infty\int_\Omega \zeta_\delta(t) u(x,t)v(x,t) (A_h \tilde{v}_k)(x,t) \d x \d t \rightarrow \int_0^\infty \int_\Omega \zeta_\delta(t) u(x,t)v^2(x,t) \d x \d t \rightarrow \int_0^T \int_\Omega u(x,t)v^2(x,t) \d x \d t
	\]
	and similarly that
	\[
		\int_0^\infty\int_\Omega \zeta_\delta(t) v(x,t) (A_h \tilde{v}_k)(x,t)  \d x \d t \rightarrow \int_0^\infty \int_\Omega \zeta_\delta(t) v^2(x,t) \d x \d t \rightarrow \int_0^T \int_\Omega v^2(x,t)  \d x \d t
	\]
	as first $h\searrow 0$ and then $\delta \searrow 0$ for all $k\in\N$. Note hereby that the $\delta \searrow 0$ limit process works due to the dominated convergence theorem.
\end{proof}
\noindent Given this inequality, we can now prove the following important convergence property:
\begin{lemma} \label{lemma:grad_v_convergence}
	Let the function $v$ and sequence $(\eps_j)_{j\in\N}$ be as in \Cref{lemma:subsequence_extraction}. Then
	\[
		\grad v_\eps \rightarrow \grad v \stext{ as } \eps = \eps_j \searrow 0 
	\]
	in $L_\loc^2(\overline{\Omega}\times[0,\infty))$.
\end{lemma}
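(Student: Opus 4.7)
The plan is to exploit the standard fact that in a Hilbert space, weak convergence together with convergence of norms upgrades to strong convergence. Since \Cref{lemma:subsequence_extraction} already provides $\grad v_\eps \rightharpoonup \grad v$ in $L^2_\loc(\overline{\Omega}\times[0,\infty))$, it suffices to prove that
\[
\int_0^T\int_\Omega |\grad v_\eps|^2 \;\longrightarrow\; \int_0^T\int_\Omega |\grad v|^2
\]
as $\eps = \eps_j \searrow 0$ for almost every $T > 0$, since the $L^2_\loc$ convergence on arbitrary compact time intervals can then be recovered by enlarging $T$ slightly into the set where the identity holds.

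For the \emph{upper} bound on $\limsup$, I would test the second equation in (\ref{approx_problem}) with $v_\eps$ itself, integrate in time, and rearrange to obtain the energy identity
\[
\int_0^T\int_\Omega |\grad v_\eps|^2 = \tfrac{1}{2}\int_\Omega v_0^2 - \tfrac{1}{2}\int_\Omega v_\eps^2(\cdot,T) - \int_0^T\int_\Omega v_\eps^2 + \int_0^T\int_\Omega u_\eps v_\eps^2.
\]
Each of the right-hand terms should pass to the limit: $\tfrac{1}{2}\int_\Omega v_\eps^2(\cdot,T) \to \tfrac{1}{2}\int_\Omega v^2(\cdot,T)$ for a.e.\ $T$ by the $L^p(\Omega)$ convergence of $v_\eps(\cdot,T)$ from \Cref{lemma:subsequence_extraction}, and $\int_0^T\int_\Omega v_\eps^2 \to \int_0^T\int_\Omega v^2$ by the $L^p_\loc(\overline{\Omega}\times[0,\infty))$ strong convergence of $v_\eps$.

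For the \emph{lower} bound, the inequality (\ref{eq:v_ineqality}) in \Cref{lemma:lower_v_bound} (reading the integrand there as $|\grad v|^2$) rearranges exactly to
\[
\int_0^T\int_\Omega |\grad v|^2 \;\geq\; \tfrac{1}{2}\int_\Omega v_0^2 - \tfrac{1}{2}\int_\Omega v^2(\cdot,T) - \int_0^T\int_\Omega v^2 + \int_0^T\int_\Omega uv^2,
\]
which matches the limit of the right-hand side of the energy identity from the previous paragraph. Chaining this with the weak lower semicontinuity estimate yields $\int_0^T\int_\Omega |\grad v|^2 \leq \liminf \int_0^T\int_\Omega |\grad v_\eps|^2 \leq \limsup \int_0^T\int_\Omega |\grad v_\eps|^2 \leq \int_0^T\int_\Omega |\grad v|^2$, so the norm converges. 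Combined with the weak convergence of $\grad v_\eps$, expanding $\|\grad v_\eps - \grad v\|_{L^2}^2$ then gives the desired strong convergence, first on $[0,T]$ with $T$ outside the exceptional null set and then on arbitrary compact time intervals.

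The key technical obstacle is handling the quadratic product term $\int_0^T\int_\Omega u_\eps v_\eps^2$. Here I would combine the almost-everywhere convergence $u_\eps v_\eps^2 \to uv^2$ from \Cref{lemma:subsequence_extraction} with a Vitali-type argument: using Hölder with exponents tuned via the $L^2_\loc$ bound for $u_\eps$ from \Cref{lemma:l1_l2_props} and the arbitrary $L^p$ bounds for $v_\eps$ from \Cref{lemma:v_bounds}, one sees that $(u_\eps v_\eps^2)_{\eps \in (0,1)}$ is bounded in $L^p(\Omega\times(0,T))$ for some $p > 1$, hence uniformly integrable, so $L^1$ convergence follows. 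Everything else is essentially bookkeeping once this term is under control.
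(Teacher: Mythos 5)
Your proposal is correct and follows essentially the same route as the paper: weak convergence plus convergence of norms, with the upper bound from the approximate energy identity and the lower bound from the inequality of \Cref{lemma:lower_v_bound}. The only cosmetic difference is that you treat the product term $\int_0^T\int_\Omega u_\eps v_\eps^2$ by Vitali's theorem with a uniform $L^p$ bound for some $p>1$, whereas the paper splits the difference by H\"older's inequality; both are valid.
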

\begin{proof}
	Fix $T \in (0,\infty)\setminus N$ with $N$ as in \Cref{lemma:lower_v_bound}. As the already established convergence properties in \Cref{lemma:subsequence_extraction} for $v$ give us that
	\[
	\int_0^T\int_\Omega |\grad v|^2 \leq \liminf_{\eps=\eps_j \searrow 0} \int_0^T\int_\Omega |\grad v_\eps|^2,
	\]
	it is sufficient to prove a similar estimate from below.
	\\[0.5em]
	As a preparation for this, let us now first observe that
	\begin{align*}
		&\left| \; \int_0^T\int_\Omega u v^2 - \int_0^T\int_\Omega u_\eps v_\eps^2 \; \right| \\
		&\leq \int_0^T\int_\Omega |u - u_\eps| v^2 + \int_0^T\int_\Omega u_\eps |v - v_\eps| v + \int_0^T\int_\Omega u_\eps v_\eps |v - v_\eps| \\
		&\leq \|u - u_\eps\|_{L^\frac{4}{3}(\Omega\times(0,T))}\|v\|^2_{L^8(\Omega\times(0,T))} + \|u_\eps\|_{L^\frac{5}{3}(\Omega\times(0,T))}\|v - v_\eps\|_{L^{5}(\Omega\times(0,T))}\|v\|_{L^{5}(\Omega\times(0,T))} \\
		&\;\;\;\;+ \|u_\eps\|_{L^\frac{5}{3}(\Omega\times(0,T))}\|v_\eps\|_{L^{5}(\Omega\times(0,T))}\|v - v_\eps\|_{L^{5}(\Omega\times(0,T))} 
	\end{align*}
	for all $\eps \in (0,1)$. Due to the boundedness and convergence properties in \Cref{lemma:l1_l2_props}, \Cref{lemma:v_bounds} and \Cref{lemma:subsequence_extraction}, this then implies that
	\[
		  \int_0^T\int_\Omega u_\eps v_\eps^2 \rightarrow \int_0^T\int_\Omega u v^2 
	\]
	as $\eps = \eps_j \searrow 0$.
	\\[0.5em]
	Using this convergence property as well as the properties laid out in \Cref{lemma:subsequence_extraction} in combination with \Cref{lemma:lower_v_bound}, we directly see that
	\begin{align*}
		\int_0^T\int_\Omega |\grad v|^2
		&\geq -\frac{1}{2}\int_\Omega v^2(\cdot, T) + \frac{1}{2}\int_\Omega v_0^2  +
		\int_0^T \int_\Omega uv^2 - \int_0^T\int_\Omega v^2 \\
		&= \lim_{\eps = \eps_j \searrow 0} \left\{ -\frac{1}{2}\int_\Omega v_\eps^2(\cdot, T) + \frac{1}{2}\int_\Omega v_0^2  +
			\int_0^T \int_\Omega u_\eps v_\eps^2 - \int_0^T\int_\Omega v_\eps^2 
	    \right\} = \lim_{\eps = \eps_j \searrow 0}   \int_0^T\int_\Omega |\grad v_\eps|^2.
	\end{align*}
	This completes the proof.
\end{proof}

\section{Proof of \Cref{theorem:main}}
\noindent Having now assembled all the necessary convergence properties for the sequences $(u_{\eps_j})_{j\in\N}$, $(v_{\eps_j})_{j\in\N}$ and even already some necessary properties for the limit functions and solution candidates $u$ and $v$, we can now begin the proof of our central result. 
\begin{proof}[Proof of \Cref{theorem:main}]
	Let the functions $u,v$ and sequence $(\eps_j)_{j\in\N}$ be as in \Cref{lemma:subsequence_extraction}.
	\\[0.5em]
	As the properties (\ref{wsol:regularity}) and (\ref{wsol:v_inequality}) for $u$ and $v$ have already been established in \Cref{lemma:subsequence_extraction} and \Cref{lemma:v_is_weak} respectively, we only need to still prove the inequalities (\ref{wsol:mass_property}) and (\ref{wsol:ln_u_inequality}) for $u$ here.
	\\[0.5em]
	We start with (\ref{wsol:mass_property}). By just integrating the first equation the approximate solutions $u_\eps$ solve, we directly see that
	\begin{equation}
		\int_\Omega u_\eps(\cdot, T) - \int_\Omega u_0 = -\int_0^T \int_\Omega u_\eps v_\eps + \rho\int_0^T \int_\Omega u_\eps - \mu \int_0^T \int_\Omega u_\eps^2 \;\;\;\; \text{ for all } \eps \in (0,1) \text{ and } T > 0. \label{eq:approx_mass_property}
	\end{equation}
	Apart from $\int_0^T \int_\Omega u_\eps^2$, all of the terms above converge to their equivalent without $\eps$ due to \Cref{lemma:subsequence_extraction}, while we only get that
	\[
		\int_0^T \int_\Omega u^2  \leq \liminf_{\eps = \eps_j \searrow 0} \int_0^T \int_\Omega u_\eps^2
	\]
	for the remaining term due to weak convergence. Taking the limes superior on both sides of (\ref{eq:approx_mass_property}) then immediately yields (\ref{wsol:mass_property}).
	\\[0.5em]
	We now fix a nonnegative $\phi \in C_0^\infty(\overline{\Omega}\times[0,\infty))$ with $\grad \phi \cdot \nu = 0$ on $\partial \Omega\times(0,\infty)$. Similar to the above, testing the first equation in (\ref{approx_problem}) with $\frac{\phi}{u_\eps + 1}$ and partial integration yields (\ref{wsol:ln_u_inequality}) with equality and some slightly different taxis terms due to the cut-off function $\eta_\eps$ for the approximate solutions $u_\eps$.
	Then apart from 
	\[
		\int_0^\infty \int_\Omega |\grad \ln(u_\eps+1)|^2\phi,
	\]
	all of the remaining integral terms are convergent to their counterparts without $\eps$ and without the cut-off function $\eta_\eps$ as we will now briefly illustrate:
	\\[0.5em]
	Due to the $L^2_\loc(\overline{\Omega}\times[0,\infty))$ convergence of the sequence $(\ln(u_{\eps_j} + 1))_{j\in\N} $, we immediately gain that
	\[
		\int_0^\infty\int_\Omega \ln(u_\eps + 1) \phi_t \rightarrow \int_0^\infty\int_\Omega \ln(u + 1) \phi_t \stext{and}
		\int_0^\infty\int_\Omega \ln(u_\eps + 1) \laplace\phi \rightarrow \int_0^\infty\int_\Omega \ln(u + 1) \laplace \phi
	\]
	as $\eps = \eps_j \searrow 0$. To handle the taxis-induced terms, let us first observe that
	\begin{align*}
		&\left\| \; \frac{\eta_\eps(u_\eps)u_\eps}{v_\eps(u_\eps+1)} \grad v_\eps - \frac{u}{v(u+1)} \grad v \; \right\|_{L^{2}(\Omega\times(0,T))} \\
		\leq& \left\| \frac{\eta_\eps(u_\eps) u_\eps}{v_\eps(u_\eps+1)} \right\|_{L^{\infty}(\Omega\times(0,T))}  \left\| \grad v_\eps - \grad v \right\|_{L^{2}(\Omega\times(0,T))}  + \int_0^T\int_\Omega |\grad v|^2 \left(  \frac{\eta_\eps(u_\eps)u_\eps}{v_\eps(u_\eps+1)} - \frac{u}{v(u+1)}\right)^2
	\end{align*}
	for each $T > 0$ by introducing a zero. We then further note that, for each $T > 0$, \Cref{lemma:approx_exist} and \Cref{lemma:subsequence_extraction} give us that $\frac{\eta_\eps(u_\eps)u_\eps}{v_\eps(u_\eps+1)}$ and $\frac{u}{v(u+1)}$ are uniformly bounded on $\Omega\times(0,T)$ independent of $\eps$ and that $\grad v\in L^2(\Omega\times(0,T))$, which implies that the first integral term converges to zero as $\eps = \eps_j \searrow 0$ due to \Cref{lemma:grad_v_convergence} and the second integral term converges to zero as $\eps = \eps_j \searrow 0$ due to the pointwise convergence proven in \Cref{lemma:subsequence_extraction} combined with the dominated convergence theorem. Thus,
	\[
		 \frac{\eta_\eps(u_\eps)u_\eps}{v_\eps(u_\eps+1)}\grad v_\eps \rightarrow \frac{u}{v(u+1)}\grad v \stext{in}L^2_\loc(\overline{\Omega}\times[0,\infty)) \text{ as } \eps = \eps_j \searrow 0.
	\]
	If we then combine this with the weak convergence properties of the sequence $(\grad\ln(u_{\eps_j} + 1))_{j\in\N}$ in $L^2_\loc(\overline{\Omega}\times[0,\infty))$ from \Cref{lemma:subsequence_extraction}, we directly gain that
	\[
		\int_0^\infty \int_\Omega \frac{\eta_\eps(u_\eps)u_\eps}{v_\eps(u_\eps+1)} (\grad \ln(u_\eps + 1) \cdot \grad v_\eps) \phi \rightarrow  \int_0^\infty \int_\Omega \frac{u}{v(u+1)} (\grad \ln(u + 1) \cdot \grad v) \phi
	\]
	and
	\[
		\int_0^\infty \int_\Omega \frac{\eta_\eps(u_\eps)u_\eps}{v_\eps(u_\eps+1)} \grad v_\eps \cdot \grad \phi \rightarrow  \int_0^\infty \int_\Omega \frac{u}{v(u+1)} \grad v \cdot \grad \phi
	\]
	as $\eps = \eps_j \searrow 0$. As for the convergence of the remaining three relevant integral terms
	\[
		\int_0^\infty\int_\Omega \frac{u_\eps v_\eps}{u_\eps+1} \phi, \;\;\;\; \int_0^\infty\int_\Omega \frac{u_\eps}{u_\eps+1} \phi \stext{and} \int_0^\infty\int_\Omega \frac{u_\eps^2}{u_\eps+1} \phi
	\]
	towards their counterparts without $\eps$, the above argument can essentially be reused as they also feature the product of a pointwise convergent and uniformly bounded sequence of functions, which in this case is always $\frac{u_\eps\phi}{u_\eps + 1}$, and a sequence of functions converging in an appropriate $L^p_\loc(\overline{\Omega}\times[0,\infty))$ as their integrand.
	\\[0.5em]
	As in reference \cite{MR3383312}, for the remaining term we at least have the property
	\[
		\int_0^\infty \int_\Omega |\grad \ln(u+1)|^2\phi \leq \liminf_{\eps=\eps_j \searrow 0} \int_0^\infty \int_\Omega |\grad \ln(u_\eps+1)|^2\phi
	\]
	due to the weak convergence proven in \Cref{lemma:subsequence_extraction}. This then gives us (\ref{wsol:ln_u_inequality}) after taking the limes inferior of both sides of the approximated variant of (\ref{wsol:ln_u_inequality}). As such, $(u, v)$ is in fact a generalized solution in the sense of \Cref{definition:weak_solution} and the proof is complete.
\end{proof}

\section{Existence of classical solutions to the altered system (\ref{weaker_problem})}
\label{section:weakend_case}
As already mentioned in the introduction, we will devote this section to proving \Cref{prop:weaker_system_stronger_solution}, which is concerned with the global classical solvability of the altered system (\ref{weaker_problem}) featuring a stronger logistic source term. We do this to illustrate how close the interplay between the logistic term in the first equation and the growth term in the second equation is to immediately giving us global classical solvability for (\ref{problem}) while just about not being sufficient in our opinion.
\\[0.5em]
As our first step for this, we can use similar standard arguments as used in \Cref{lemma:approx_exist} to derive the following local existence result and blow-up criterion for (\ref{weaker_problem}):
\begin{lemma} \label{lemma:weaker_case_exist}
	There exists a maximal constant $\tmax \in (0,\infty]$ and functions $u,v \in C^{2,1}(\overline{\Omega}\times(0,\tmax))\cap C^0(\overline{\Omega}\times[0,\tmax))$ with $u$ nonegative and $v$ positive such that $(u,v)$ is the unique classical solution of the system (\ref{weaker_problem}) with (\ref{boundary_conditions})--(\ref{intial_data_regularity}) on $\overline{\Omega}\times[0,\tmax)$. Further, the solution $(u,v)$ adheres to the following blow-up criterion:
	\begin{equation}
		\label{eq:weak_blowup}
		\text{If } \tmax < \infty, 
		\text{ then } \limsup_{t\nearrow \tmax} \left\{ \|u(\cdot, t)\|_\L{\infty} + \|v(\cdot, t)\|_{W^{1,q}(\Omega)} \right\} = \infty \text{ or } \liminf_{t\nearrow \tmax} \inf_{x\in\Omega} v(x,t) = 0.
	\end{equation} 
	Here, $q$ is some real number in $(2,2+\gamma)$.
\end{lemma}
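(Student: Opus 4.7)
The plan is to adapt essentially verbatim the contraction mapping argument already used in \Cref{lemma:approx_exist} (itself drawn from \cite{MR2146345}). The two ingredients that made that argument work — the fact that the positive lower bound $\inf_{\overline{\Omega}} v_0 > 0$ propagates to a quantitative positive lower bound for $v$ on any finite time interval via the (unchanged) second equation, exactly as in (\ref{eq:lower_bound_v}), and the smoothness of the nonlinearity away from $v=0$ — are both available here without modification, since the only structural changes in (\ref{weaker_problem}) relative to (\ref{approx_problem}) are the removal of the cut-off $\eta_\eps$ and the strengthening of $-\mu u^2$ to $-\mu u^{2+\gamma}$, neither of which affects the local Lipschitz properties of the right-hand side on a bounded set where $u$ is controlled in $L^\infty$ and $v$ is bounded away from zero.

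Concretely, I would fix some $q \in (2, 2+\gamma)$ and, for $R > 0$ large and $T > 0$ small depending on $R$ and the initial data, construct a fixed point in the closed set
\[
X_{T,R} \defs \left\{\, (u,v) \in C^0([0,T]; C^0(\overline{\Omega}))\times C^0([0,T];W^{1,q}(\Omega)) \; : \; \|u\|_\infty + \|v\|_{W^{1,q}} \leq R,\; v \geq \tfrac{1}{2}\inf_{\overline{\Omega}} v_0\right\}.
\]
Given $(\bar u, \bar v) \in X_{T,R}$, the image $(u,v)$ is obtained by plugging $(\bar u, \bar v)$ into the nonlinearities of (\ref{weaker_problem}) and solving the resulting linear problems via the Neumann heat semigroup in mild form. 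Standard smoothing estimates of the form $\|e^{t\laplace} \grad \cdot f\|_\L{\infty} \leq C t^{-\frac{1}{2}-\frac{1}{q}} \|f\|_\L{q}$ together with the Sobolev embedding $W^{1,q}(\Omega) \hookrightarrow L^\infty(\Omega)$ valid in two dimensions for $q > 2$, combined with the uniform positive lower bound on $v$ that keeps the singular sensitivity $\frac{\bar u}{\bar v}$ bounded, should yield both self-mapping and contractivity for sufficiently small $T$. Nonnegativity of $u$ and positivity of $v$ propagate to the fixed point via the maximum principle and the semigroup representation already exploited in (\ref{eq:lower_bound_v}), and parabolic regularity lifts the mild solution to a classical one.

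Iterative restart of this local construction then produces a maximal $\tmax \in (0, \infty]$. For the blow-up criterion (\ref{eq:weak_blowup}), I would argue contrapositively: assuming $\tmax < \infty$ but that all three of $\|u(\cdot,t)\|_\L{\infty}$, $\|v(\cdot,t)\|_{W^{1,q}(\Omega)}$, and $1/\inf_{x\in\Omega} v(x,t)$ stay bounded as $t\nearrow\tmax$, one uses the mild formulation to obtain uniform Hölder-in-time continuity of the solution up to $\tmax$ and extract well-defined limit time-slices in $C^0(\overline{\Omega}) \times W^{1,q}(\Omega)$ with a strictly positive second component. Restarting the fixed point argument at some $t_0 < \tmax$ sufficiently close to $\tmax$ with these slices as new initial data then extends the solution past $\tmax$, contradicting maximality.

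The main technical obstacle I anticipate is choosing the function space for $v$ so that the distributional taxis contribution $\div(\tfrac{u}{v}\grad v)$ can be handled inside the smoothing estimates while simultaneously allowing Sobolev embedding to close in $\R^2$; this is precisely what dictates the range $q \in (2, 2+\gamma)$. Apart from this bookkeeping, the proof introduces no genuinely new ideas beyond those in \Cref{lemma:approx_exist}, which is why I expect it can be written in brief by referring to the parallel argument there.
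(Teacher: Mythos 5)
Your proposal is correct and follows exactly the route the paper intends: the paper gives no written proof for this lemma, merely invoking the same standard contraction mapping argument from \Cref{lemma:approx_exist} (adapted from \cite{MR2146345}), which is precisely what you spell out. The only minor imprecision is that the upper restriction $q<2+\gamma$ is not actually dictated by the local existence argument (any $q>2$ works there); it is needed only later, in \Cref{lemma:weak_higher_v_bounds}, to interpolate $\|uv\|_{L^q}$ against the $L^{2+\gamma}$ space-time integrability of $u$.
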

\noindent We now fix appropriate initial data $(u_0, v_0)$, a unique solution $(u,v)$ on $\overline{\Omega}\times[0,\tmax)$ corresponding to said initial data and $\tmax \in (0,\infty]$, $q\in(2,2+\gamma)$ according to \Cref{lemma:weaker_case_exist}.
\\[0.5em]
One additional result directly reusable from the existence theory in \Cref{lemma:approx_exist} is that
\begin{equation}
	v(\cdot, t) \geq e^{-t}\inf_{x\in\Omega}v_0(x) \;\;\;\; \text{ for all } t\in[0,\tmax) \label{eq:weak_lower_v_bound}
\end{equation}
due to semigroup methods, which immediately prevents one of the possible blow-up scenarios.
\\[0.5em]
As many techniques to derive a priori estimates for $(u,v)$ translate directly from \Cref{section:apriori} due to the changes above in a sense only working in our favor, we will now only briefly revisit some of the foundational results from said section and translate these to $(u,v)$. 
\begin{lemma} \label{lemma:weak_baseline_estimates}
	There exists a constant $C_1 > 0$ such that
	\[
		\|u(\cdot, t)\|_\L{1} \leq C_1,\;\; \|v(\cdot, t)\|_\L{1} \leq C_1 \;\;\;\; \text{ for all } t\in[0,\tmax)
	\]
	and, if $\tmax < \infty$, there exists a constant $C_2 > 0$ such that
	\[
		\int_0^\tmax \int_\Omega u^{2} \leq C_2, \;\;\;\; \int_0^\tmax \int_\Omega u^{2+\gamma} \leq C_2.
	\]
\end{lemma}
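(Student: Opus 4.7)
The plan is to essentially transcribe the proof of \Cref{lemma:l1_l2_props} into the setting of the altered system~(\ref{weaker_problem}), taking advantage of the fact that the stronger logistic source term $-\mu u^{2+\gamma}$ can only help us here.

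First I would add the first two equations of (\ref{weaker_problem}) and integrate over $\Omega$. Since $\grad u \cdot \nu = 0$ and $\grad v \cdot \nu = 0$ (note that the boundary condition (\ref{boundary_conditions}) implies $\grad u \cdot \nu = 0$ once $\grad v \cdot \nu = 0$, after use of the first equation's structure, though for a classical solution one may equivalently just integrate the divergence-form terms), the Laplacian and taxis terms vanish and the $\pm u v$ terms cancel, leaving
\[
\frac{\d}{\d t}\int_\Omega (u + v) = -\int_\Omega v + \rho \int_\Omega u - \mu \int_\Omega u^{2+\gamma} \stext{for all} t \in [0,\tmax).
\]
Then I would apply Young's inequality (with conjugate exponents $2+\gamma$ and $\tfrac{2+\gamma}{1+\gamma}$) to bound $(|\rho|+1) u \leq \mu u^{2+\gamma} + K$ for a suitable constant $K = K(\rho,\mu,\gamma) > 0$, which rearranges to
\[
-\mu \int_\Omega u^{2+\gamma} \leq -(|\rho|+1)\int_\Omega u + K|\Omega|.
\]
Substituting this back yields the autonomous differential inequality
\[
\frac{\d}{\d t}\int_\Omega (u+v) \leq -\int_\Omega (u+v) + K|\Omega|,
\]
from which the first assertion follows via a standard ODE comparison with the constant $C_1 \defs \max\bigl(K|\Omega|, \int_\Omega(u_0+v_0)\bigr)$.

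For the second assertion, I would instead simply time-integrate the original identity without discarding the logistic term. Rearranging,
\[
\mu \int_0^T \int_\Omega u^{2+\gamma} \leq \int_\Omega (u_0 + v_0) + |\rho| \int_0^T\int_\Omega u \leq \int_\Omega (u_0 + v_0) + |\rho| C_1 T \stext{for all} T \in [0,\tmax).
\]
Under the additional hypothesis $\tmax < \infty$, taking $T \nearrow \tmax$ immediately provides a uniform bound $\int_0^\tmax \int_\Omega u^{2+\gamma} \leq C_2'$. The $L^2$ bound for $u$ then follows trivially from Young's inequality in the form $u^2 \leq u^{2+\gamma} + C'$ integrated over $\Omega\times (0,\tmax)$, absorbing the constant into $C_2$ thanks to the assumed finiteness of $\tmax$.

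There is no real obstacle in this lemma beyond mild bookkeeping: the argument is an exact analogue of \Cref{lemma:l1_l2_props}, with the stronger dissipative term $-\mu u^{2+\gamma}$ used in place of $-\mu u^2$ to dominate the linear growth $\rho u$ via Young's inequality, and the finiteness of $\tmax$ employed only in the space–time estimates. The choice of the exponent $2+\gamma > 2$ ensures both Young-type estimates used above hold with finite constants, which is all that is really needed here.
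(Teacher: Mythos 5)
Your proposal is correct and follows essentially the same route as the paper: add the two equations, cancel the $\pm uv$ terms, absorb $\rho\int_\Omega u$ into $-\mu\int_\Omega u^{2+\gamma}$ via Young's inequality to get the linear ODE comparison for $\int_\Omega(u+v)$, and then time-integrate (the paper integrates only the first equation here, you integrate the sum — an immaterial difference) to get the space--time bound for $u^{2+\gamma}$, with the $L^2$ bound following from the finiteness of $\tmax$ (Young's inequality in your version, H\"older's in the paper's; these are interchangeable).
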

\begin{proof}
	Similar to the proof of \Cref{lemma:l1_l2_props}, we can gain these bounds by adding the first and second equation and integrating to see that
	\[
		\frac{\d }{\d t} \int_\Omega (u + v) \leq -\int_\Omega (u + v) + K_1 \;\;\;\; \text{ for all } t \in [0,\tmax)
	\]
	with $K_1 \defs [ \frac{1+|\rho|}{2+\gamma} ]^{2+\gamma}\frac{1+\gamma}{\mu^{1+\gamma}}|\Omega|$ due to Young's inequality. Further if $\tmax < \infty$ and we just integrate the first equation in (\ref{weaker_problem}), we gain that
	\[
		\int_0^\tmax \int_\Omega u^{2+\gamma} \leq \frac{1}{\mu}\int_\Omega u_0 + \frac{|\rho|}{\mu}\int_0^\tmax \int_\Omega u.
	\]
	As in \Cref{lemma:l1_l2_props}, these inequalities directly imply most of our results while the last remaining bound follows due to the Hölder inequality and the fact that $2 < 2 + \gamma$.
\end{proof}
\noindent Because the second equation in the approximated system (\ref{approx_problem}) and the altered system (\ref{weaker_problem}) are the same, \Cref{lemma:v_bounds} translates almost verbatim.
\begin{lemma}\label{lemma:weak_vp_bounds}
	If $\tmax < \infty$, there exists a constant $C(p) > 0$ such that
	\[
		\|v(\cdot, t)\|_\L{p} \leq C(p) 
	\]
	for all $t \in [0,\tmax)$. 
\end{lemma}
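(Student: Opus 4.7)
The plan is to essentially copy the proof of \Cref{lemma:v_bounds} verbatim, replacing the approximate solutions $(u_\eps, v_\eps)$ by the classical solution $(u,v)$ to (\ref{weaker_problem}) and the arbitrary finite $T$ by $\tmax$. The only ingredient that needs to be supplied from a different source is the global (in time up to $\tmax$) bound $\int_0^\tmax \int_\Omega u^2 \leq C_2$, which is now furnished by \Cref{lemma:weak_baseline_estimates} rather than by \Cref{lemma:l1_l2_props}.

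First I would fix $p > 1$ and test the second equation in (\ref{weaker_problem}) with $v^{p-1}$. After dropping the boundary terms via the homogeneous Neumann condition and applying Young's inequality together with the two-dimensional Gagliardo--Nirenberg inequality in the form
\[
\|\phi\|_{L^4(\Omega)}^2 \leq K_1\|\grad\phi\|_{L^2(\Omega)}\|\phi\|_{L^2(\Omega)} + K_1 \|\phi\|_{L^{2/p}(\Omega)}^2
\]
to the factor $\|v^{p/2}\|_{L^4(\Omega)}^2$ appearing in the estimate of $\int_\Omega u v^p = \int_\Omega u (v^{p/2})^2$, I would arrive at a differential inequality of the form
\[
\frac{1}{p}\frac{\d}{\d t}\int_\Omega v^p \leq -\frac{p-1}{p^2}\int_\Omega |\grad v^{p/2}|^2 + K_3\left(\int_\Omega u^2\right)\int_\Omega v^p + K_4\left(1+\int_\Omega u^2\right),
\]
where the $\|v^{p/2}\|_{L^{2/p}(\Omega)}^2 = (\int_\Omega v)^p$ factor is controlled uniformly in time by the $L^1$ estimate from \Cref{lemma:weak_baseline_estimates}.

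Next I would drop the nonpositive gradient term and apply Gronwall's lemma on $[0,\tmax)$. The decisive input is that $\int_0^\tmax \int_\Omega u^2 \leq C_2$ by \Cref{lemma:weak_baseline_estimates}, which allows us to bound the Gronwall exponential by an $\eps$-independent (and $t$-independent) constant $\exp(pK_3 C_2)$, and similarly to bound the forcing integral $\int_0^t K_4(1+\int_\Omega u^2)\,\d s \leq K_4(\tmax + C_2)$. This gives a uniform $L^p$ bound on $v(\cdot, t)$ for $t \in [0,\tmax)$.

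I do not expect any significant obstacle here: the argument is essentially the one already carried out in \Cref{lemma:v_bounds} and the second equations in (\ref{approx_problem}) and (\ref{weaker_problem}) are structurally identical, so no cut-off function enters. The only thing worth double-checking is that the bound on $\int_0^\tmax \int_\Omega u^2$ in \Cref{lemma:weak_baseline_estimates} is indeed global on $[0,\tmax)$ under the standing hypothesis $\tmax < \infty$ (which it is, by the Hölder step between $u^2$ and $u^{2+\gamma}$ at the end of that lemma), so that Gronwall's inequality closes with a constant depending only on $p$.
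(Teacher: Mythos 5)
Your proposal is correct and is exactly the argument the paper intends: the paper omits the proof entirely, remarking only that \Cref{lemma:v_bounds} ``translates almost verbatim'' since the second equations of (\ref{approx_problem}) and (\ref{weaker_problem}) coincide, and your write-up supplies precisely that translation with the space-time $L^2$ bound on $u$ now coming from \Cref{lemma:weak_baseline_estimates}.
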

\noindent While it might only seem like a slight improvement, the leeway afforded to us by $\gamma > 0$ then allows us to nonetheless achieve a critical $L^\infty$ bound for $v$ and an additional bound for the gradient of $v$, which both eluded us in the case discussed in the previous sections. It is both of these results that will ultimately prove to be the key to the existence of global classical solutions for this case.
\begin{lemma} \label{lemma:weak_higher_v_bounds}
	If $\tmax < \infty$, there exists a constant $C > 0$ such that
	\[
		\|v(\cdot, t)\|_\L{\infty} \leq C 
	\]
	and
	\[
		\|\grad v(\cdot, t)\|_\L{q} \leq C
	\]
	for all $t \in [0,\tmax)$.
\end{lemma}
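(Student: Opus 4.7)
The plan is to extract both bounds from the mild-solution representation
\[
	v(\cdot, t) = e^{t(\laplace - 1)} v_0 + \int_0^t e^{(t-s)(\laplace - 1)}\bigl(u(\cdot,s) v(\cdot, s)\bigr) \d s, \qquad t \in [0, \tmax),
\]
of the second equation in (\ref{weaker_problem}), understood with the Neumann boundary conditions of the problem. The key resources are the spacetime bound $\int_0^\tmax \int_\Omega u^{2+\gamma} < \infty$ from \Cref{lemma:weak_baseline_estimates} and the pointwise-in-time estimates $\|v(\cdot,t)\|_{\L{p}} \leq C(p)$ for every finite $p$ granted by \Cref{lemma:weak_vp_bounds}. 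Standard $L^p$--$L^q$ smoothing estimates for the Neumann heat semigroup will then do the rest.

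For the $L^\infty$ bound, I would fix some $p \in ((2+\gamma)/(1+\gamma),\,2+\gamma)$, an interval that is nonempty precisely because $\gamma > 0$, and apply the two-dimensional smoothing estimate $\|e^{\tau(\laplace - 1)} f\|_{\L{\infty}} \leq K_1\,\tau^{-1/p}\,e^{-\tau}\,\|f\|_{\L{p}}$ to the Duhamel term. Hölder's inequality in space gives $\|(uv)(s)\|_{\L{p}} \leq \|u(s)\|_{\L{2+\gamma}}\|v(s)\|_{\L{r}}$ with $r = p(2+\gamma)/(2+\gamma - p) < \infty$, and the second factor is uniformly bounded by \Cref{lemma:weak_vp_bounds}. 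A subsequent application of Hölder's inequality in time with exponents $2+\gamma$ and $(2+\gamma)/(1+\gamma)$ reduces the remaining integral to a product of the finite kernel integral $\int_0^\tmax \tau^{-(2+\gamma)/(p(1+\gamma))} \d \tau$, convergent thanks to the choice $p > (2+\gamma)/(1+\gamma)$, and the bounded spacetime norm $\bigl(\int_0^\tmax \int_\Omega u^{2+\gamma}\bigr)^{1/(2+\gamma)}$. Combined with the elementary bound $\|e^{t(\laplace-1)} v_0\|_{\L{\infty}} \leq \|v_0\|_{\L{\infty}}$, this yields the desired uniform $L^\infty$ estimate.

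Having established $v \in L^\infty$, the $L^q$ bound on $\grad v$ would follow by an essentially analogous argument, this time based on the gradient smoothing estimate $\|\grad e^{\tau(\laplace - 1)} f\|_{\L{q}} \leq K_2\,\tau^{-1/2}\,e^{-\tau}\,\|f\|_{\L{q}}$ applied with the very $q \in (2, 2+\gamma)$ provided by \Cref{lemma:weaker_case_exist}. Since $v$ is now bounded and $q < 2+\gamma$, we have $\|(uv)(s)\|_{\L{q}} \leq C \|u(s)\|_{\L{2+\gamma}}$ by another application of Hölder in space. A second Hölder step in time with the same pair of exponents $2+\gamma$ and $(2+\gamma)/(1+\gamma)$ leaves the time integral $\int_0^\tmax \tau^{-(2+\gamma)/(2(1+\gamma))} \d \tau$, which is again finite precisely because $\gamma > 0$. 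The homogeneous contribution $\grad e^{t(\laplace-1)} v_0$ is directly controlled by $\|\grad v_0\|_{\L{\infty}}$ using that $v_0 \in W^{1,\infty}(\Omega)$.

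The principal technical point, and the place where the strict inequality $\gamma > 0$ is indispensable, is that we only control $u$ in a spacetime $L^{2+\gamma}$ sense rather than pointwise in $t$. The Hölder-in-time step after the smoothing estimate is therefore unavoidable, and the requirement that the resulting singular time-kernels $(t-s)^{-(2+\gamma)/(p(1+\gamma))}$ and $(t-s)^{-(2+\gamma)/(2(1+\gamma))}$ both be integrable is exactly what is unlocked by $\gamma > 0$; at $\gamma = 0$ both exponents collapse to $1$, producing a non-integrable kernel. This quantitatively mirrors the gap between the generalized-solution framework needed for (\ref{problem}) and the classical solvability available for (\ref{weaker_problem}).
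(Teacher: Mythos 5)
Your argument is correct and follows essentially the same route as the paper: Duhamel's formula with Neumann heat-semigroup smoothing, H\"older in space to isolate $\|u\|_{L^{2+\gamma}(\Omega)}$ against the uniformly controlled $L^p$ norms of $v$, and the observation that $\gamma>0$ makes the resulting time kernel subcritical so that the spacetime bound on $\int\int u^{2+\gamma}$ can be brought to bear. The only cosmetic difference is that the paper applies Young's inequality pointwise in time with the conjugate pair $\bigl(2+\gamma,\;(2+\gamma)/(1+\gamma)\bigr)$ (and funnels both estimates through the single term $\int_0^t (t-s)^{-1/2}e^{-(t-s)}\|uv\|_{L^q}$) where you use H\"older in time with the same exponents; the integrability condition on the kernel is identical in both versions.
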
 
\begin{proof} 
By using the mild solution representation of $v$ (relative to the semigroup $e^{t(\laplace - 1)}$) and well-known smoothness estimates for said semigroup, we see that
\begin{align*}
	\|v(\cdot, t)\|_\L{\infty} &\leq \|v_0\|_\L{\infty} + K_1\int_0^t (t-s)^{-\frac{1}{2}}e^{-(t-s)} \|u v\|_\L{2} \d s \\
	&\leq  \|v_0\|_\L{\infty} + K_1|\Omega|^{\frac{q-2}{2q}}\int_0^t (t-s)^{-\frac{1}{2}}e^{-(t-s)} \|u v\|_\L{q} \d s 
\end{align*}
and
\[
	\|\grad v(\cdot, t)\|_\L{q} \leq K_1\|\grad v_0\|_\L{q} + K_1\int_0^t (t-s)^{-\frac{1}{2}}e^{-(t-s)} \|u v\|_\L{q} \d s	
\]
for all $t \in [0,\tmax)$ and some constant $K_1 > 0$. Using the Hölder and Young inequalities, we can now further estimate the critical integral term in both of the above inequalities as
\begin{align*}
	\int_0^t (t-s)^{-\frac{1}{2}}e^{-(t-s)} \|u v\|_\L{q} \leq& \int_0^t (t-s)^{-\frac{1}{2}}e^{-(t-s)} \|u\|_\L{2+\gamma}\|v\|_\L{p} \d s \\
	\leq& \frac{1}{2+\gamma}\int_0^t \int_\Omega u^{2 + \gamma} \d s  + \frac{1}{r} \int_0^t (t-s)^{-\frac{r}{2}}e^{-r(t-s)} \|v\|^r_\L{p} \d s \label{eq:critical_integral_term} \numberthis
\end{align*}
with $p \defs \frac{q(2 + \gamma)}{2 + \gamma - q} \in (q,\infty)$ because $q\in(2,2+\gamma)$ and $r \defs \frac{2 + \gamma}{1 + \gamma} \in (1,2)$ for all $t \in [0,\tmax)$. Due to \Cref{lemma:weak_baseline_estimates}, \Cref{lemma:weak_vp_bounds} and the fact that $\frac{r}{2} < 1$, the remaining integrals in (\ref{eq:critical_integral_term}) are uniformly bounded for all $t \in [0,\tmax)$, which directly implies our desired results.
\end{proof}
\noindent By similar semigroup methods, we can now gain a corresponding result for the first solution component $u$:
\begin{lemma}\label{lemma:weak_linfty_u}
If $\tmax < \infty$, there exists $C > 0$ such that
	\[
		\|u(\cdot, t)\|_\L{\infty} \leq C
	\]
	for all $t\in[0,\tmax)$.
\end{lemma}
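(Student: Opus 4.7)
The plan is to prove $\|u(\cdot, t)\|_\L{\infty} \leq C$ in two steps: first establish uniform $L^p$ bounds on $u$ for every finite $p$, then upgrade to $L^\infty$ by a semigroup bootstrap applied to the mild formulation of the first equation in (\ref{weaker_problem}).

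For step one I would test the first equation in (\ref{weaker_problem}) with $u^{p-1}$ for fixed $p \geq 2$, handle the taxis term via Young's inequality, use $v\geq e^{-t}\inf_\Omega v_0$ from (\ref{eq:weak_lower_v_bound}) to control $v^{-2}$, and invoke $\|\grad v\|_\L{q}$ bounded from \Cref{lemma:weak_higher_v_bounds} (with $q\in(2,2+\gamma)$) to arrive at
\[
\tfrac{1}{p}\tfrac{\d}{\d t}\int_\Omega u^p + \tfrac{p-1}{2}\int_\Omega u^{p-2}|\grad u|^2 + \mu\int_\Omega u^{p+1+\gamma} \leq C(p)\int_\Omega u^p|\grad v|^2 + \rho\int_\Omega u^p
\]
after discarding $-\int_\Omega u^p v \leq 0$. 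The taxis-driven term $\int_\Omega u^p|\grad v|^2$ I would estimate by Hölder with exponents $q/(q-2)$ and $q/2$ against $\|u\|_\L{pq/(q-2)}^p$, which via the two-dimensional Gagliardo--Nirenberg inequality applied to $u^{p/2}$ is controlled by powers of $\|\grad u^{p/2}\|_\L{2}$ and $\|u^{p/2}\|_\L{2}$. A Young's inequality with a small parameter absorbs the gradient contribution into $\tfrac{p-1}{2}\int_\Omega u^{p-2}|\grad u|^2$, reducing everything to
\[
\tfrac{\d}{\d t}\int_\Omega u^p + \tfrac{\mu p}{2}\int_\Omega u^{p+1+\gamma} \leq K(p)\int_\Omega u^p + K'(p).
\]
Here the residual $K(p)\int_\Omega u^p$ can itself be absorbed into $\tfrac{\mu p}{2}\int_\Omega u^{p+1+\gamma}$ by Young's inequality precisely because $\gamma>0$ provides the quantitative margin $p+1+\gamma>p$. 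Combined with the reverse Hölder bound $\int u^{p+1+\gamma} \geq C(\int u^p)^{(p+1+\gamma)/p}$, the estimate collapses to a scalar ODE of the form $y'(t) + Cy(t)^{1+(1+\gamma)/p} \leq C$ for $y(t)\defs\int_\Omega u^p(\cdot,t)$, which is uniformly bounded on $[0,\tmax)$.

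For step two I would apply Duhamel's formula relative to the Neumann heat semigroup $(e^{t\laplace})_{t\geq 0}$. Using its positivity-preserving property together with the non-positivity of $-uv$ and $-\mu u^{2+\gamma}$, I would pass to the pointwise upper bound
\[
u(\cdot, t) \leq e^{t\laplace}u_0 - \chi\int_0^t e^{(t-s)\laplace}\div\bigl(\tfrac{u}{v}\grad v\bigr)(\cdot,s)\d s + \rho_+\int_0^t e^{(t-s)\laplace}u(\cdot,s)\d s.
\]
The smoothing estimate $\|e^{t\laplace}\div F\|_\L{\infty}\leq K t^{-\frac{1}{2}-\frac{1}{q_0}}\|F\|_\L{q_0}$ for $q_0\in(2,q)$ is integrable in time since $\tfrac{1}{2}+\tfrac{1}{q_0}<1$; combining it with $\|\tfrac{u}{v}\grad v\|_\L{q_0} \leq K\|u\|_\L{a}\|\grad v\|_\L{q}$ via Hölder (where $\tfrac{1}{q_0}=\tfrac{1}{a}+\tfrac{1}{q}$ and $\|u\|_\L{a}$ is uniformly bounded by step one), using contractivity of $e^{t\laplace}$ on $L^\infty$, and applying Grönwall's inequality to $M(t)\defs\sup_{s\in[0,t]}\|u(\cdot,s)\|_\L{\infty}$ then yields the desired bound. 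The main obstacle lies in step one, where the absorption of the taxis-generated $\int u^p|\grad v|^2$ against the dissipation $-\mu\int u^{p+1+\gamma}$ is the precise point at which the strengthened logistic source of (\ref{weaker_problem}) pays off in comparison to the borderline system (\ref{problem}).
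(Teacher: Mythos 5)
Your proposal is correct, but it takes a longer route than the paper. The paper has no analogue of your step one: it works directly from the Duhamel representation, bounds the reaction terms pointwise via $\rho y-\mu y^{2+\gamma}\leq K_1$, estimates the taxis term through $\|\tfrac{u}{v}\grad v\|_\L{p}\leq K\|u\|_\L{r}\|\grad v\|_\L{q}$ with $p\in(2,q)$ and $r=\tfrac{pq}{q-p}$, and then --- instead of invoking a priori $L^r$ bounds --- interpolates $\|u\|_\L{r}\leq\|u\|_\L{1}^{1/r}\|u\|_\L{\infty}^{1-1/r}$ against the $L^\infty$ norm itself, closing the argument with the sublinear self-bounding inequality $M_T\leq K_2+K_5M_T^{1-\alpha}$. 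Your testing procedure with $u^{p-1}$, absorption of the taxis contribution via Gagliardo--Nirenberg into the dissipation $\int_\Omega u^{p-2}|\grad u|^2$, and subsequent Duhamel bootstrap all go through given the inputs from \Cref{lemma:weak_baseline_estimates}, \Cref{lemma:weak_higher_v_bounds} and (\ref{eq:weak_lower_v_bound}); what the paper's version buys is brevity and the observation that only the $L^1$ bound on $u$ is needed, while yours buys the stronger intermediate conclusion that $u$ is bounded in every $\L{p}$ and avoids the slightly delicate closing step of a nonlinear inequality for $M_T$. One small misattribution: the absorption of the residual $K(p)\int_\Omega u^p$ into $\mu\int_\Omega u^{p+1+\gamma}$ works just as well for $\gamma=0$, since $p+1>p$ already; the place where $\gamma>0$ genuinely pays off is upstream, in \Cref{lemma:weak_higher_v_bounds}, which supplies the $W^{1,q}$ bound for $v$ with $q>2$ that both your argument and the paper's consume.
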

\begin{proof}
Due to the fact that there exists $K_1 > 0$ such that $\rho y - \mu y^{2 + \gamma} \leq K_1$ for all $y \geq 0$, we can estimate the mild solution representation of $u$ (relative to the semigroup $e^{t\laplace}$) as follows:
\[
	u(\cdot, t) \leq \|u_0\|_\L{\infty} + \chi \int_0^t e^{(t-s)\laplace} \div (\tfrac{u}{v} \grad v) \d s + \int_0^t K_1  \d s \leq K_2 + \chi\int_0^t e^{(t-s)\laplace} \div (\tfrac{u}{v} \grad v)   \d s
\]	
with $K_2 \defs \|u_0\|_\L{\infty} + \tmax K_1$ for all $t\in[0,\tmax)$. Now fix $p\in (2,q)$. By well-known semigroup smoothness estimates and the Hölder inequality, we can then improve the above to
\begin{align*}
	\|u(\cdot, t)\|_\L{\infty} &\leq K_2 + \chi K_3 \int_0^t (t-s)^{-\frac{1}{2} - \frac{1}{p}} \|\tfrac{u}{v} \grad v\|_\L{p}  \d s \\
	&\leq K_2 + K_4\int_0^t (t-s)^{-\frac{1}{2} - \frac{1}{p}} \|u\|_\L{r} \|\grad v\|_\L{q}  \d s \\
	&\leq K_2 + K_4\int_0^t (t-s)^{-\frac{1}{2} - \frac{1}{p}} \|u\|^\alpha_\L{1} \|u\|^{1-\alpha}_\L{\infty} \|\grad v\|_\L{q}  \d s
\end{align*}
with some constant $K_3 > 0$, $K_4 \defs \chi K_3 e^{\tmax}(\inf_{x\in\Omega} v_0(x))^{-1},\; r \defs \frac{pq}{q-p} \in (p,\infty)$ and $\alpha \defs \frac{1}{r}$ for all $t \in [0,\tmax)$. If we now define $M_T \defs \|u\|_{L^\infty(\Omega\times[0,T])} < \infty$ for every $T \in [0,\tmax)$, the above inequality allows us to derive that
\[
	M_T \leq K_2 + K_5 M_T^{1-\alpha}
\]
for some $K_5 > 0$, which is independent of $T$, due to \Cref{lemma:weak_baseline_estimates}, \Cref{lemma:weak_higher_v_bounds} and the fact that $\frac{1}{2} + \frac{1}{q} < 1$. This implies that there exists a constant $K_6 > 0$ such that $M_T \leq K_6$ for all $T\in [0,\tmax)$. This completes the proof.
\end{proof}
\noindent
\Cref{lemma:weak_higher_v_bounds} and \Cref{lemma:weak_linfty_u} have now shown that the remaining blow-up scenarios in (\ref{eq:weak_blowup}) are impossible as well and therefore we can now prove \Cref{prop:weaker_system_stronger_solution} as follows:
\begin{proof}[Proof of \Cref{prop:weaker_system_stronger_solution}]
	Assume $\tmax < \infty$. Then by (\ref{eq:weak_lower_v_bound}), we directly gain that
	\begin{equation}
		\inf_{x\in\Omega}v(x,t) \geq e^{-\tmax}\inf_{x\in\Omega} v_0(x) > 0  \;\;\;\; \text{ for all } t\in [0,\tmax). \label{eq:no_blowup1}
	\end{equation}
	Further due to \Cref{lemma:weak_higher_v_bounds} and \Cref{lemma:weak_linfty_u}, we can gain $K_1 > 0$ such that
	\begin{equation}
		\|v(\cdot,t)\|_{W^{1,q}(\Omega)} \leq K_1, \;\;\;\; \|u(\cdot, t)\|_\L{\infty} \leq K_1 \;\;\;\; \text{ for all } t\in [0,\tmax). \label{eq:no_blowup2}
	\end{equation}
	Together (\ref{eq:no_blowup1}) and (\ref{eq:no_blowup2}) contradict the blow-up criterion (\ref{eq:weak_blowup}) and therefore we must have $\tmax = \infty$, which completes the proof.
\end{proof}

\section*{Acknowledgment} The author acknowledges support of the \emph{Deutsche Forschungsgemeinschaft} in the context of the project \emph{Emergence of structures and advantages in cross-diffusion systems}, project number 411007140.

\end{document}